\documentclass{amsart}
\usepackage[all]{xy}
\usepackage{amsmath}
\usepackage{amsfonts}
\usepackage{amssymb}
\usepackage{amsthm}
\usepackage{amscd}
\usepackage{enumitem}
\usepackage{latexsym}
\usepackage{txfonts}
\usepackage{tikz}
\usepackage{graphicx}
\usepackage[all]{xy}
\usepackage{fullpage}
%\setlength{\droptitle}{-4cm}
%newcommands and enivronments
\newcommand{\bb}{\mathbb}
\newcommand{\rk}{\mrm{rk}_{\mathfrak{p}}}

\newcommand{\p}{\mathfrak{p}}
\newcommand{\la}{\lambda}
\newcommand{\K}{K^{\times}}
\newcommand{\ov}{\overline}
\newcommand{\R}{\mathcal{R}}
\newcommand{\B}{M^{\intercal}}
\newcommand{\Q}{Q^{\intercal}}
\newcommand{\C}{\mathcal{K}}
\newcommand{\SZ}{\mathcal{SZ}(G,A,B)}
\newcommand{\W}{\mathcal{W}(G,A,B,C,D)}
\newcommand{\Pa}{\mathcal{P}(q)}
\theoremstyle{plain}
\newtheorem{theorem}{Theorem}
\newtheorem{lemma}[theorem]{Lemma}
\newtheorem{corollary}[theorem]{Corollary}
\newtheorem{prop}[theorem]{Proposition}

\newtheorem*{theorem*}{Theorem}
\theoremstyle{remark} \newtheorem{remark}{Remark}
\theoremstyle{remark} 
\newcommand{\mrm}{\mathrm}

\begin{document}
\title{Difference families, skew Hadamard matrices, and Critical groups of doubly-regular tournaments.}
\date{}
\author{Venkata Raghu Tej Pantangi}
\email{pvrt1990@gmail.com}
\address{}
\begin{abstract}
In this paper we investigate the structure of the critical groups of doubly-regular tournaments (DRTs) associated with skew Hadamard difference families (SDFs) with one, two, or four blocks. In \cite{BR}, the existence of a skew Hadamard matrix of order $n+1$ was found to be  equivalent to the existence of a DRT on $n$ vertices. A well known construction of a skew Hadamard matrix order $n$ is by constructing skew Hadamard difference sets in abelian groups of order $n-1$. The Paley skew Hadamard matrix is an example of one such construction.  Szekeres \cite{SZ,SZ2} and Whiteman \cite{W} constructed skew Hadamard matrices from skew Hadamard difference families with two blocks. Wallis and Whiteman \cite{WW} constructed skew Hadamard matrices from skew Hadamard difference families with four blocks. In this paper we consider the critical groups of DRTs associated with skew Hadamard matrices constructed from skew Hadamard difference families with one, two or four blocks. We compute the critical groups of DRTs associated with skew Hadamard difference families with two or four blocks. We also compute the critical group of the Paley tournament and show that this tournament is inequivalent to the other DRTs we considered. Consequently we prove that the associated skew Hadamard matrices are not equivalent.   
\end{abstract}
\keywords{invariant factors, elementary divisors, Smith normal form, critical group, sandpile group, Laplacian, doublyregular tournament, skew Hadamard matrix, skew Hadamard difference family.
}

\maketitle

\section{Introduction.}\label{intr}
A Hadamard matrix $H$ of order $n$ is an $n\times n$ matrix of $+1$'s and $-1$'s such that $HH^{\intercal}=nI$. It is well known that if $n$ is the order of a Hadamard matrix, then $n=1,2$ or $n\equiv 0 \pmod{4}$. It is conjectured that Hadamard matrices of order $n$ exist for all $n \equiv 0 \pmod{4}$. The smallest $n$ for which there is no known Hadamard matrix is $n=668$ (c.f. \cite{KH}). In this paper we deal with skew Hadamard matrices. A Hadamard matrix $H$ is said to be skew if $H+H^{\intercal}=2I$.

Two Hadamard matrices are considered equivalent if one can be obtained from the other by negating rows or columns, or by interchanging rows or columns. It is of interest to determine the equivalence of Hadamard matrices of the same order. Every Hadamard matrix of order $n$ is equivalent to a Hadamard matrix of the form 
$\left[\begin{smallmatrix}
1 & \mathbf{1}_{n-1}^{\intercal} \\
-\mathbf{1}_{n-1} & H_{0}
\end{smallmatrix} \right]$. All the Hadamard matrices we consider in this paper are assumed to be in this form.

In this paper, we are interested in the inequivalence of skew Hadamard matrices. If $H_{1}$ and $H_{2}$ are two Hadamard matrices of same order but with different Smith normal forms, then they are inequivalent. In \cite{MW}, it was found that the Smith normal form of any skew Hadamard matrix of order $4m$ is $\mrm{diag}[1,\underbrace{2,\ldots,\ 2}_{2m-1},\underbrace{2m,\ldots,\ 2m}_{2m-1}, 4m]$. So Smith normal form fails to distinguish inequvivalent skew Hadamard matrices of the same order. In this article we consider a different invariant associated with skew Hadamard matrices.

A tournament $T_{n}$ of order $n$ is a directed graph obtained by assigning directions to every edge of a complete graph on $n$ vertices. Given vertices $v,w$ of $T_{n}$, by $d(v)$ we denote the outdegree of $v$ and by $d(v,w)$ we denote the number of vertices dominated by $v$ and $w$. A doubly-regular tournamnet (DRT) with parameters $(n,k, \lambda)$ is a tournament of order $n$ such that for every pair of distinct vertices $v,\ w$, we have $d(v)=k$ and $d(v,w)=\lambda$. It is easy to see that $n=4\lambda +3$ and $k=2\lambda+1$. Theorem 2 of \cite{BR} shows that a skew Hadamard matrix of order $n+1$ exists if and only if there is a DRT on $n$ vertices. Given a Hadamard $H$ matrix of order $4\la+4$, the matrix $M$ which is obtained by deleting the first column and row of $\dfrac{1}{2}(J-H)$ is the adjacency matrix of a DRT with parameters $(4\la+3, 2\la+1,\la)$. Here $J$ is the matrix of all ones. Now if $\tilde{M}$ is the adjacency matrix of a DRT with parameters $(4\la+3, 2\la+1,\la)$, then $\tilde{H}=\left[\begin{smallmatrix}
1 & \mathbf{1}^{\intercal}_{4\la+3} \\
-\mathbf{1}_{4\la+3} & J-2\tilde{M}
\end{smallmatrix} 
\right]$ is a skew Hadamard matrix.
Any invariant of the DRT graph associated with a skew Hadamard matrix $H$ is an equivalence preserving invariant of $H$. In this paper, we look at the critical groups of DRTs.          

Let $\Gamma=(V,E)$ be a finite, connected, loopless, possibly directed graph on vertex set $V$ with edge set $E \subset V \times V$. We say that a vertex $v$ dominates a vertex $w$ if $(v,w) \in E$. By $\Delta_{v}$ we denote the number of vertices dominated by $v$.
By $\bb{Z}^{V}$ we denote the free abelian group with $V$ as a basis set. Then the adjacency map $\nu_{M}: \bb{Z}^{V} \to \bb{Z}^{V} $ that maps $v \in V$ to the formal sum of vertices dominated by $v$, encodes adjacency of the graph. The map $\nu_{Q}: \bb{Z}^{V} \to \bb{Z}^{V} $ that maps $v \in V$ to $\Delta_{v}v-\nu_{M}(v)$ is called the Laplacian map. The \emph{critical} group $\C$ of $\Gamma$ is the finite part of the cokernal of $\nu_{Q}$. The critical group is an invariant of the graph. Now let $\beta$ be an ordered basis of $\bb{Z}^{V}$ that is obtained by fixing an order on $V$. The adjacency matrix $M$ of $\Gamma$ is the matrix representation of $\nu_{M}$ with respect to $\beta$. We define the Laplacian matrix $Q$ to be the matrix representation of $\nu_{Q}$ with respect to $\beta$. Let $\Delta$ be the diagonal matrix whose $v$th diagonal entry is $\Delta_{v}$. Then we have $Q=\Delta-M$. If $Q_{v}$ is the matrix obtained by deleting the $v$th row and $v$th column of $Q$, then the Matrix-tree theorem (eg. \cite[5.64 and 5.68]{EC2}) states that $|det(Q_{v})|$ is the number of oriented trees in $\Gamma$ with root $v$. If $\Gamma$ is a directed Eulerian graph, $det(Q_{v})$ is independent of the vertex $v$ and $|\C|=|det(Q_{v})|$. If $\Gamma$ is undirected, $|\C|=|det(Q_{v})|$ is the number of spanning trees of $\Gamma$. For a nice survey on critical groups of graphs, we refer to \cite[\S 3]{sta}.
Some papers with computations of critical groups of families of graphs include \cite{Vince}, \cite{Dlor}, \cite{Lor1}, \cite{CSX}, \cite{Bai}, \cite{JNR}, \cite{DS}, \cite{SDB}, \cite{PS}, and \cite{P}. In \cite{Lor1}, Lorenzini examined the proportion of graphs with cyclic critical groups among graphs with critical groups of particular order.

One effective way of constructing skew Hadamard matrices/DRTs is by using skew difference families. Let $(G,+)$ be an additive finite abelian group of order $n$. A \emph{skew difference family} (SDF) on $l$ blocks with parameters $(n, k, \la)$ is a family $\{B_{i}| 1\leq i \leq l\}$ of $k$-subsets such that for all $1\leq i\leq l$ and $g\in G\setminus \{0_{G}\}$, we have (i) $|\{(x,y)\ \in \bigcup\limits_{i=1}^{l} B_{i}\times B_{i}\ |\ g=x-y\}|=\la-1$, (ii) $B_{i} \cap -B_{i} = \emptyset$, and (iii) $B_{i} \cup -B_{i}= G\setminus \{0_{G}\}$. An SDF with one block in $G$ is called a skew Hadamard difference set. 

We will now describe a few SDFs found in literature. The earliest construction is that of the Paley difference set by Paley \cite{Pal}. It was conjectured that Paley difference set was the only(upto equvivalence) SDF with one block. Ding and Yuan \cite{DY} disproved the conjecture by constructing other SDFs with one block. Szekeres \cite{SZ,SZ2}, Whiteman \cite{W} found an SDF with two blocks in $(\bb{F}_{q},+)$, where either $q\equiv 5 \pmod{8}$; or $q=p^{e}$ with $p \equiv 5 \pmod{8}$ a prime and $e\equiv 2\pmod{4}$. Wallis and Whiteman \cite{WW} constructed an SDF with four blocks in $(\bb{F}_{q},+)$, where $q \equiv 9 \pmod{16}$. Momihara and Xiang \cite{QM} generalised the constructions by Szekres, Wallis and Whiteman to obtain the following result. 

\begin{prop}\cite[Theorem 1.5]{QM}\label{MX}
Let $u\geq 2$ be an integer and $q$ be a prime power such that $q \equiv 2^u+1 \pmod {2^{u+1}}$. Then for any positive integer $e$, there exists a skew Hadamard difference family with $2^{u-1}$
blocks in $(\bb{F}_{q^{e}}, +)$.
\end{prop}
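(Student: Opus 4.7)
The plan is to construct the $2^{u-1}$ blocks as unions of cyclotomic cosets in $\mathbb{F}_{q^e}^{\times}$ and to verify the defining conditions via additive-character (Gauss-sum) analysis. The congruence $q \equiv 2^u + 1 \pmod{2^{u+1}}$ is equivalent to $v_2(q - 1) = u$, and by the lifting-the-exponent lemma $v_2(q^e - 1) = u + v_2(e)$ for every $e \geq 1$. Setting $N = 2^{v_2(q^e - 1)}$, fix a primitive element $\omega$ of $\mathbb{F}_{q^e}$ and consider the cyclotomic cosets $C_j = \omega^j \langle \omega^{N} \rangle$ for $j = 0, \ldots, N - 1$. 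Because $-1 = \omega^{(q^e - 1)/2}$ and $(q^e - 1)/2 \equiv N/2 \pmod{N}$, negation acts on cosets by $j \mapsto j + N/2$, so $C_j$ and $-C_j$ are always distinct.

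I would then take each block $B_i$ to be a disjoint $2^{v_2(e)}$-fold union of cosets, of total size $(q^e - 1)/2^u$, chosen so that $\bigcup_{i} B_i$ selects exactly one coset from each negation pair $\{C_j, C_{j + N/2}\}$. Under the natural reading of the SDF axioms, conditions (ii) and (iii) are automatic: each $B_i$ is disjoint from $-B_i$, and $\{B_i\} \cup \{-B_i\}$ partitions $\mathbb{F}_{q^e} \setminus \{0\}$. The nontrivial content is condition (i), the uniform-difference condition. By Fourier analysis on $(\mathbb{F}_{q^e}, +)$, (i) is equivalent to the statement that $\sum_{i=0}^{2^{u-1}-1} |\widehat{1_{B_i}}(\psi)|^2$ is constant in the nontrivial additive character $\psi$. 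Expanding each $\widehat{1_{B_i}}(\psi)$ as a linear combination of Gauss sums of order $N$ over $\mathbb{F}_{q^e}$ reduces the problem to the evaluation and cancellation of these Gauss sums.

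The main obstacle is this Gauss-sum step. For the classical cases $u = 2$ (Szekeres--Whiteman) and $u = 3$ (Wallis--Whiteman) one can compute the relevant cyclotomic numbers directly. For general $u$ the hypothesis $q \equiv 2^u + 1 \pmod{2^{u+1}}$ forces $q$ to have order exactly $2$ in $(\mathbb{Z}/2^{u+1}\mathbb{Z})^{\times}$, so the multiplicative characters of order $2^{u+1}$ of $\mathbb{F}_{q^e}^{\times}$ fall into Frobenius orbits of size $2$ and the corresponding Gauss sums admit closed-form expressions up to Galois conjugation. The technical heart of the argument is then to choose the partition of cosets into blocks so that the Gauss-sum cross terms in $\sum_i |\widehat{1_{B_i}}(\psi)|^2$ cancel uniformly across characters $\psi$. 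That combinatorial matching --- tuned against the explicit Gauss-sum values --- is the step I would expect to require the most care.
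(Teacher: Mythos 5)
This proposition is quoted from Momihara and Xiang \cite[Theorem 1.5]{QM}; the paper gives no proof of its own, so your argument has to stand alone, and as written it does not. The first concrete problem is that your blocks have the wrong size. Condition (iii) of the definition requires \emph{each} block to satisfy $B_i \cup -B_i = \mathbb{F}_{q^e}\setminus\{0\}$, hence $|B_i| = (q^e-1)/2$: each block individually must pick exactly one coset from every negation pair $\{C_j, C_{j+N/2}\}$, i.e.\ be a union of $N/2$ cyclotomic cosets. You instead take $|B_i| = (q^e-1)/2^u$ and only arrange that the union $\bigcup_i B_i$ meets each negation pair once. Already in the base case $u=2$, $e=1$ this disagrees with the Szekeres construction, whose two blocks are each unions of two of the four quartic classes (size $(q-1)/2$), not single classes. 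So the object you describe is not a skew Hadamard difference family in the sense being asserted.

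The second and more fundamental problem is that the proof stops exactly where the theorem begins. The reduction of condition (i) to the constancy of $\sum_i \bigl|\widehat{1_{B_i}}(\psi)\bigr|^2$ over nontrivial additive characters $\psi$ is correct and standard, and expanding in Gauss sums is the right first move; but you never specify which cosets form which block, never evaluate the Gauss sums, and explicitly defer the cancellation argument as ``the step I would expect to require the most care.'' That step is the entire content of the theorem. It requires explicit Gauss-sum evaluations together with a carefully designed assignment of cosets to blocks, and the character bookkeeping for general $e$ is more delicate than you indicate: the hypothesis controls $q$ only modulo $2^{u+1}$, and for $v_2(e)\ge 2$ the order of $q$ modulo $N=2^{u+v_2(e)}$ is $2^{v_2(e)}$, not $2$, so the Frobenius orbits of the relevant characters are not all of size $2$ and the claimed closed-form evaluations do not come for free. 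As it stands the proposal is a plausible research plan, not a proof.
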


Szekeres \cite{SZ} also proved the following result.

\begin{prop}\cite[Theorem 3]{SZ}\label{SZ}
Let $q$ be a prime power such that $q \equiv 3 \pmod {4}$. Then, there exists a skew Hadamard difference family with $2$
blocks in $(\bb{Z}/n\bb{Z} , +)$, where $n=\frac{q-1}{2}$.
\end{prop}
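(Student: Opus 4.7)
My plan is to construct the two blocks inside the multiplicative group of nonzero squares of $\bb{F}_{q}$ and transport them to $(\bb{Z}/n\bb{Z},+)$ via a canonical isomorphism. The starting observation is that since $q\equiv 3\pmod{4}$, $-1$ is a nonsquare in $\bb{F}_{q}$; hence the subgroup $C:=(\bb{F}_{q}^{\times})^{2}$ has odd order $n$ and is disjoint from $\{-1\}$. Consequently the composition $C\hookrightarrow \bb{F}_{q}^{\times}\twoheadrightarrow \bb{F}_{q}^{\times}/\{\pm 1\}\xrightarrow{\sim}(\bb{Z}/n\bb{Z},+)$, where the last arrow sends the class of a fixed primitive root $\omega$ to $1$, is an isomorphism $\Phi:C\to \bb{Z}/n\bb{Z}$ under which inversion in $C$ corresponds to negation in $\bb{Z}/n\bb{Z}$. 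Thus building a $2$-block SDF in $(\bb{Z}/n\bb{Z},+)$ is equivalent to building one in $(C,\cdot)$, with ``skew'' meaning disjoint from the set of inverses.

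To define the blocks I would use the quadratic character $\chi$ of $\bb{F}_{q}$. The key principle is that any antisymmetric Legendre expression $f(c)=\chi(P(c))$, meaning one satisfying $f(c^{-1})=-f(c)$, cuts $C\setminus\{1\}$ into a skew set $\{c:f(c)=+1\}$ and its inverse. The prototypical example is
\[\chi(c^{-1}-1)=\chi(-1)\chi(c)^{-1}\chi(c-1)=-\chi(c-1),\]
using $\chi(-1)=-1$ (valid because $q\equiv 3\pmod{4}$) and $\chi(c)=+1$ (valid for $c\in C$). Thus $B_{1}:=\{c\in C\setminus\{1\}:\chi(c-1)=+1\}$ is skew of the correct size $(n-1)/2$, and a second antisymmetric expression, for instance $\chi((c^{2}-1)/c)=\chi(c-1)\chi(c+1)$ which satisfies $\chi(c^{-1}-c)=-\chi(c-c^{-1})$, produces a companion skew subset $B_{2}$. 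Pulling $B_{1},B_{2}$ back through $\Phi$ delivers conditions (ii) and (iii) of the SDF definition automatically.

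The heart of the argument is verifying the uniform difference condition (i). Writing $\mathbf{1}_{B_{i}}(c)=\tfrac{1}{2}(1+f_{i}(c))$ where $f_{i}$ is the relevant antisymmetric Legendre symbol, the number of representations of a fixed nonidentity $g\in C$ as $xy^{-1}$ with $x,y$ in a common block $B_{i}$ expands into a linear combination of Jacobsthal-type character sums $\sum_{x\in \bb{F}_{q}}\chi(P_{g}(x))$, for low-degree polynomials $P_{g}$ whose coefficients depend on $g$. For $q\equiv 3\pmod{4}$ these sums admit classical explicit evaluations in terms of the cyclotomic numbers of order $2$ modulo $q$. The main obstacle---and the step that dictates the precise choice of $B_{2}$---is the final cancellation: showing that after adding the $B_{1}$ and $B_{2}$ contributions the $g$-dependent terms vanish and only the constant $\la-1=(n-3)/2$ remains. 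I would verify this by splitting on the joint values of $\chi(g-1)$ and $\chi(g+1)$ and invoking the Jacobsthal-sum identities, which is where the hypothesis $q\equiv 3\pmod{4}$ is used essentially.
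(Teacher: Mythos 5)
The paper offers no proof of this proposition --- it is imported verbatim from Szekeres --- so there is nothing to compare your approach against; I am judging the construction on its own. Your reduction to the multiplicative group $C$ of nonzero squares, the isomorphism $\Phi$, and the first block $B_{1}=\{c\in C\setminus\{1\}:\chi(c-1)=+1\}$ are all correct and agree with the classical construction. The gap is in the second block: $B_{2}=\{c\in C\setminus\{1\}:\chi(c-1)\chi(c+1)=+1\}$ does not satisfy the uniform difference condition. Concretely, take $q=19$, $n=9$, $C=\{1,4,5,6,7,9,11,16,17\}\subset\mathbb{F}_{19}^{\times}$. Then $B_{1}=\{5,6,7,17\}$ and $B_{2}=\{5,6,9,11\}$, and the element $6$ has four representations as $xy^{-1}$ with $x,y$ in a common block, namely $(17,6)$ and $(7,17)$ from $B_{1}$ and $(11,5)$ and $(9,11)$ from $B_{2}$, whereas $7$ has only the two representations $(5,17)$ and $(6,9)$; the required common value is $\lambda-1=3$.

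The failure is structural rather than a matter of tuning $B_{2}$. Carrying out your own expansion, the $g$-dependent part of the $B_{1}$-count is (up to elementary terms) $\sum_{y\in\mathbb{F}_{q}}\chi\bigl(y(y-1)(y-g)\bigr)$, a complete character sum of a cubic with three distinct roots --- an elliptic-curve (Legendre-curve) sum --- and your $B_{2}$ contributes the same sum evaluated at $g^{2}$. Such sums are not determined by $\chi(g-1)$ and $\chi(g+1)$, so no case split on those values and no order-$2$ cyclotomic or Jacobsthal identity will produce the cancellation you need; one would need $\sum_{y}\chi\bigl(y(y-1)(y-g)\bigr)+\sum_{y}\chi\bigl(y(y-1)(y-g^{2})\bigr)=0$ for all $g\in C\setminus\{1\}$, which already fails at $q=19$, $g=6$ (both sums equal $4$). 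What makes Szekeres's construction work is that the companion block is $\{c\in C:\chi(c+1)=+1\}$: the substitution $y\mapsto -y$ together with $\chi(-1)=-1$ sends its cubic sum to the exact negative of the one from $B_{1}$, leaving only quadratic sums of quadratic polynomials, which are elementary. But $\chi(c^{-1}+1)=\chi(c+1)$ for $c\in C$, so that block is \emph{symmetric} under inversion, not skew --- your insistence that the second block also be cut out by an antisymmetric Legendre expression is precisely what excludes the choice that works. Indeed, for $n=9$ one can check that every skew $4$-subset of $\mathbb{Z}/9\mathbb{Z}$ represents the difference $3$ an odd number of times, so no pair of skew blocks can achieve the odd total $\lambda-1=3$; any correct proof must let the second block be symmetric (as Szekeres does), which also means it cannot satisfy conditions (ii)--(iii) of the paper's definition of an SDF.
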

      
In this paper, we compute the critical groups of the three families of DRTs described below. Given $X\subset G$, by $\delta_{X}$ we denote the characteristic function of $X$ in $G$.\\ 
(i) Let $(G,+)$ be an additive abelian group of order $2\la+1$ and $(A,B)$ be an SDF with two blocks in $G$, with parameters $(2\la+1, \la, \la-1)$.
Then $\SZ$ is the graph with vertex set $V=\{v_{0}\} \cup \{a_{g}| \ g \in G\} \cup \{b_{g}|\ g \in G\}$, whose adjacency map $\nu_{M}:\bb{Z}^{V}\to \bb{Z}^{V}$ satisfies
\begin{align}\label{sza}
\begin{split}
\nu_{M}(v_{0}) &= \sum\limits_{x\in G} a_{x}\\
\nu_{M}(a_{g}) &= \sum\limits_{z\in G} \delta_{A}(z) a_{g+z} + \sum\limits_{z\in G} \delta_{B \cup\{0_{G}\}}(z) b_{g+z}\\
\nu_{M}(b_{g}) &= v_{0}+\sum\limits_{z\in G} \delta_{-A}(z) b_{g+z} + \sum\limits_{z\in G} \delta_{B \cup\{0_{G}\}}(z) a_{g+z}\end{split},
\end{align} 
for all $g\in G$.
Theorem 2 of \cite{SZ} shows that $\SZ$ is a DRT with parameters $(4\la+3,2\la+1,\la)$. Setting $u=2$ in Proposition \ref{MX} provides us with a famiy of SDFs with two blocks. Proposition \ref{SZ} provides another such family. Theorem \ref{szc} describes the critical group of $\SZ$. 
We utilize the natural action of group $G$ on the vertex set of $\SZ$ to compute the Smith normal form of its Laplacian.

(ii)Let $(G,+)$ be an additive abelian group of order $2\la+1$ and $(A,B,C,D)$ be an SDF with four blocks in $G$, with parameters $(2\la+1, \la, \la-1)$. 

Then by $\W$ we denote a graph with vertex set $V=\{v_{1},v_{2},v_{3}\} \bigcup\limits_{\mu=a,b,c,d} V_{\mu}$. Here $V_{\mu}=\{\mu_{g}| \ g \in G\}$. We require the adjacency map $\nu_{M}:\bb{Z}^{V} \to \bb{Z}^{V}$ of $\W$ to satisfy
\begin{align}\label{wwa}
\begin{split}
\nu_{M}(v_{1})&= \sum\limits_{x\in G} a_{x}+ \sum\limits_{x\in G} c_{x} \\
\nu_{M}(v_{2}) &= \sum\limits_{x\in G} a_{x}+ \sum\limits_{x\in G} d_{x}\\
\nu_{M}(v_{3}) &= \sum\limits_{x\in G} a_{x}+ \sum\limits_{x\in G} b_{x}\\
\nu_{M}(a_{g}) &= \sum\limits_{z\in G} \delta_{A}(z) a_{g+z} + \sum\limits_{z\in G} \delta_{B \cup\{0_{G}\}}(z) b_{g+z} + \sum\limits_{z\in G} \delta_{-C \cup\{0_{G}\}}(z) c_{z-g} + \sum\limits_{z\in G} \delta_{D \cup\{0_{G}\}}(z) d_{(z+g)}\\
\nu_{M}(b_{g}) &= v_{1}+v_{2}+\sum\limits_{z\in G} \delta_{B}(z) a_{g+z} + \sum\limits_{z\in G} \delta_{-A}(z) b_{g+z} + \sum\limits_{z\in G} \delta_{-D}(z) c_{(g+z)} + \sum\limits_{z\in G} \delta_{C \cup\{0_{G}\}}(z) d_{z-g}\\
\nu_{M}(c_{g}) &= v_{2}+v_{3}+\sum\limits_{z\in G} \delta_{C}(z) a_{z-g} + \sum\limits_{z\in G} \delta_{-D \cup \{0_{G}\}}(z) b_{g+z} + \sum\limits_{z\in G} \delta_{A}(z) c_{(g+z)} + \sum\limits_{z\in G} \delta_{B}(z) d_{(g+z)}\\
\nu_{M}(d_{g}) &= v_{1}+v_{3}+\sum\limits_{z\in G} \delta_{D}(z) a_{g+z} + \sum\limits_{z\in G} \delta_{C}(z) b_{z-g} + \sum\limits_{z\in G} \delta_{B \cup \{0_{G}\}}(z) c_{(g+z)} + \sum\limits_{z\in G} \delta_{-A}(z) d_{(g+z)} 
\end{split},
\end{align} 
for all $g \in G$. 

Let $(g_{1},g_{2}, \ldots, g_{2\la+1})$ be an ordering on $G$. Consider the ordered basis
$$\beta= (v_{1},v_{2},v_{3}, a_{g_{1}},\ldots, a_{g_{2\la+1}},\ b_{g_{1}},\ldots, b_{g_{2\la+1}},\ c_{g_{1}},\ldots, c_{g_{2\la+1}},\ d_{g_{1}},\ldots, d_{g_{2\la+1}}).$$ Let $M$ be the matrix representation of $nu_{M}$ with respect to $\beta$.

Theorem 12 of \cite{WW} states that $\left[\begin{smallmatrix}
1 & \mathbf{1}^{\intercal}_{8\la+3} \\
-\mathbf{1}_{8\la+3} & J-2M
\end{smallmatrix} 
\right]$ is a skew Hadamard matrix. Using this we see that $\W$ is a DRT with parameters $(8\la+7,4\la+3,2\la+1)$. Setting $u=3$ in Proposition \ref{MX} provides us with a famiy of SDFs with four  blocks. Theorem \ref{wwc} describes the critical group of $\W$.
We utilize the natural action of group $G$ on the vertex set of $\W$ to compute the Smith normal form of its Laplacian.
   
(iii) 
The third family we consider is the family of Paley tournaments. Let $A$ be a skew Hadamard difference set in an abelian group $G$ of order $4\la+3$. By $DRT(G,A)$ we denote the graph with vertex set $\{[g]|\ g\in G\}$ and arc set $\{([g],[h])|\ h-g\in A\}$. The adjacency map $\nu_{M}:\bb{Z}^{G} \to \bb{Z}^{G}$ satisfies $\nu_{M}([g])= \sum\limits_{z\in G} \delta_{A}(z)[g+z]$.  
Let $p^{t}$ be a power of a prime $p$ with $q \equiv 3 \pmod{4}$ and let $\bb{F}_{q}$ be the finite field of order $q$. Let $H$ be the set of non-zero squares in $\bb{F}_{q}$. It is well known that $H$ is a skew Hadamard difference set in the additive group $(\bb{F}_{q},+)$ of the field. The Paley tournament graph $\mathcal{P}(q)$ is $DRT(G,H)$, that is, it is the Cayley graph on $(\bb{F}_{q}, +)$ with ``connection" set being the multiplicative subgroup of squares in $\bb{F}_{q}$. Theorem \ref{pc} describes the critical group of $\Pa$. This was essentially computed in \cite{CSX}, in which the authors describe the critical group of the Paley graph. This computation involves some Jacobi sums involving the quadratic character $\psi$. The only difference between our computation here and that in \cite{CSX} is that $\psi(-1)=-1$ in our case.

\section{Main results.}
Let $\C$ be the critical group of a DRT with parameters $(4\la+3,2\la+1,\la)$, by $\C_{1}$  we denote the subgroup of order $(\la+1)^{2\la+1}$. Let $\C_{2}$ be the subgroup of $\C$ of order $(4\la+3)^{2\la}$. We observe that $\C =\C_{1} \oplus\C_{2}$. In \S\ref{easy} we show that $\C_{1}$ depends only on the parameter $\la$.
\begin{theorem}\label{drtc}
Let $\la$ be a positive integer and let $\C$ denote the critical group of a DRT with parameters $(4\la+3, 2\la+1, \la)$. Then $\C= \left(\bb{Z}/ (\la+1)\bb{Z}\right)^{2\la+1} \oplus \C_{2}$, where $\C_{2}$ is a subgroup of order $(4\la+3)^{2\la}$. 
\end{theorem}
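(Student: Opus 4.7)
The plan is to combine an algebraic identity satisfied by the adjacency matrix $M$ of any DRT with parameters $(4\la+3,\,2\la+1,\,\la)$ with a characteristic-polynomial reduction modulo primes dividing $\la+1$. Throughout, set $n=4\la+3$ and $Q=(2\la+1)I-M$.

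The starting point is to derive the identity $M^2+M+(\la+1)I=(\la+1)J$ from the DRT relations $M+M^\intercal=J-I$ and $MM^\intercal=(\la+1)I+\la J$ (the latter coming from $d(v,w)=\la$). This pins down the eigenvalues of $M$ on $\mathbf{1}^\perp$ as the complex roots of $x^2+x+(\la+1)$, each with multiplicity $2\la+1$, so the matrix-tree theorem yields $|\C|=(\la+1)^{2\la+1}(4\la+3)^{2\la}$. Since $\gcd(\la+1,4\la+3)=1$, the primary decomposition of $\C$ automatically takes the form $\C=\C_1\oplus\C_2$ with the orders as claimed, so the remaining task is to determine the structure of $\C_1$.

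For the exponent, I substitute $M=(2\la+1)I-Q$ into the master identity to obtain
\[
Q\bigl(Q-(4\la+3)I\bigr)=(\la+1)\bigl(J-(4\la+3)I\bigr).
\]
For $x\in\mathbf{1}^\perp\cap\bb Z^{n}$ the right-hand side equals $-(\la+1)(4\la+3)x$, so $(\la+1)(4\la+3)$ annihilates $\C$; taking $(\la+1)$-primary parts, $\la+1$ annihilates $\C_1$. For the rank at each prime $p\mid\la+1$, the characteristic polynomial $\chi_M(x)=(x-(2\la+1))(x^2+x+(\la+1))^{2\la+1}$ reduces mod $p$ (using $2\la+1\equiv-1$ and $\la+1\equiv 0$) to $x^{2\la+1}(x+1)^{2\la+2}$. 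The identity $M(M+I)=(\la+1)(J-I)$ gives $M(M+I)\equiv 0\pmod p$, so $M$ is diagonalizable over $\bb F_{p}$; the geometric multiplicity of $-1$ therefore equals its algebraic multiplicity $2\la+2$. Since $Q\equiv-(M+I)\pmod p$, this forces $\dim_{\bb F_{p}}\ker Q=2\la+2$, so $\C_1\otimes\bb F_{p}$ has dimension $(2\la+2)-1=2\la+1$.

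Combining these two bounds, $\C_{1,p}$ has order $p^{(2\la+1)v_p(\la+1)}$, is annihilated by $p^{v_p(\la+1)}$, and has $\bb F_{p}$-rank $2\la+1$; the only abelian group with these invariants is $(\bb Z/p^{v_p(\la+1)}\bb Z)^{2\la+1}$. Assembling across the primes $p\mid\la+1$ gives $\C_1=(\bb Z/(\la+1)\bb Z)^{2\la+1}$. The main obstacle is the rank step: the argument hinges on the quadratic $x^2+x+(\la+1)$ splitting into two distinct linear factors mod $p$, so that $M$ really diagonalizes after reduction, and this is what ultimately nails down the elementary-abelian structure of $\C_1$.
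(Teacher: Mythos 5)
Your argument is correct, and it lands on the same two key facts as the paper's proof --- that for each prime $p\mid\la+1$ the $p$-rank of $Q$ is exactly $2\la+1$, and that the nontrivial invariant factors at $p$ all equal the $p$-part of $\la+1$ --- but it gets there by a different mechanism. The paper works with the transpose identities \eqref{ae} and \eqref{me}: the upper bound on the rank comes from $Q\Q\equiv O \pmod p$, the lower bound from $Q+\Q\equiv -(I+J)$, and the invariant factors are controlled by Lemma \ref{snfprod} applied to $Q\Q=(\la+1)\bigl((4\la+3)I-J\bigr)$, whose Smith normal form is known explicitly. You instead exploit the single polynomial identity $M^{2}+M+(\la+1)I=(\la+1)J$: reducing mod $p$ makes $M$ a root of the separable polynomial $x(x+1)$, hence diagonalizable over $\mathbb{F}_{p}$, which pins the nullity of $Q\equiv-(M+I)$ at exactly $2\la+2$ in one stroke; and the rewritten identity $Q\bigl(Q-(4\la+3)I\bigr)=(\la+1)\bigl(J-(4\la+3)I\bigr)$ shows that $(\la+1)(4\la+3)$ annihilates $\C$, bounding the exponent. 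Both routes then finish by comparing rank, exponent, and total order, and both counting arguments are airtight. Your version is somewhat more self-contained (no appeal to the theorem on invariant factors of matrix products), at the cost of one small point worth making explicit: the exponent step uses that every torsion class of $\mathrm{coker}(Q)$ is represented by a vector in $\mathbf{1}^{\perp}\cap\bb Z^{V}$, which holds because a DRT is Eulerian, so $Q$ has vanishing row and column sums and $\mathrm{coker}(Q)\cong\bb Z\oplus\bigl((\mathbf{1}^{\perp}\cap\bb Z^{V})/Q\bb Z^{V}\bigr)$ with the second summand equal to $\C$.
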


The result below describes the critical group of $\SZ$. We prove this in \S\ref{sc}.
\begin{theorem}\label{szc}
Let $\la$ be a positive integer and let $(A,B)$ be an SDF in an additive abelian $G$ with $|G|=2\la+1$. Let $Q$ denote the Laplacian matrix  of $\SZ$ and by $\C$ we denote its critical group. Then $\C= \left(\bb{Z}/(\la+1)\bb{Z}\right)^{2\la+1} \oplus \left(\bb{Z}/(4\la+3)\bb{Z}\right)^{2\la}$. Let $\p$ be a prime and let $\rk(Q)$ denote the $\p$-rank of $Q$. If $\p \mid \la+1$, then $\rk(Q)=2\la+1$; and if $\p \mid 4\la+3$, then $\rk(Q)=2\la+2$.  
\end{theorem}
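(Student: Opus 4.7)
The plan is to exploit the natural action of $G$ on $\SZ$---fixing $v_0$ and translating regularly on each of the orbits $\{a_g\}$ and $\{b_g\}$---in order to block-diagonalize $Q$ via characters of $G$. By Theorem~\ref{drtc}, $\C = (\bb{Z}/(\la+1)\bb{Z})^{2\la+1} \oplus \C_2$ with $|\C_2| = (4\la+3)^{2\la}$, and since $\gcd(\la+1, 4\la+3) = 1$, this already forces $\rk(Q) = 2\la+1$ for every prime $\p \mid \la+1$. So the new content is to identify $\C_2 \cong (\bb{Z}/(4\la+3)\bb{Z})^{2\la}$ and, equivalently, to show $\rk(Q) = 2\la+2$ for every prime $\p \mid 4\la+3$.

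Over a splitting field $E$ containing a primitive $(2\la+1)$-th root of unity, $E^V$ decomposes into $G$-isotypic components: a $3$-dimensional trivial-character block $W_1 = \langle v_0, \sum_g a_g, \sum_g b_g \rangle$ and $2$-dimensional blocks $W_\chi = \langle \alpha_\chi, \beta_\chi \rangle$ for each nontrivial character $\chi$, where $\alpha_\chi = \sum_g \chi(g) a_g$ and $\beta_\chi = \sum_g \chi(g) b_g$. Writing out \eqref{sza} presents the restriction $Q_\chi$ of $Q$ to $W_\chi$ as an explicit $2\times 2$ matrix in the character sums $\hat{A}(\chi^{\pm 1}) = \sum_{a\in A}\chi^{\pm 1}(a)$ and $\hat{B}(\chi^{\pm 1})$. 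The skewness conditions (ii)--(iii) give $\hat{A}(\chi) + \hat{A}(\chi^{-1}) = \hat{B}(\chi) + \hat{B}(\chi^{-1}) = -1$, while the difference-family condition (i) gives $\hat{A}(\chi)\hat{A}(\chi^{-1}) + \hat{B}(\chi)\hat{B}(\chi^{-1}) = \la+1$; substituting these into the expansion of $\det Q_\chi$ simplifies it to the uniform value $(\la+1)(4\la+3)$, independently of $\chi$. (Equivalently, this also follows from restricting the DRT identity $QQ^{\intercal} = (\la+1)\bigl((4\la+3)I - J\bigr)$ to the nontrivial $\chi$-blocks, on which $J$ vanishes.) A brief row/column reduction of the $3\times 3$ block $Q_1$, using that $2\la+1$ and $\la+1$ are units modulo any $\p \mid 4\la+3$, produces Smith form $\mrm{diag}(1,1,0)$ for $Q_1$, so the trivial block contributes no $\p$-torsion.

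Now fix $\p \mid 4\la+3$ and set $v := v_\p(4\la+3)$. Since $\gcd(\p, 2\la+1) = 1$, the decomposition is available already over the unramified local extension $\bb{Z}_\p[\zeta_{2\la+1}]$ of $\bb{Z}_\p$. For each nontrivial $\chi$, $v_\p(\det Q_\chi) = v$, and inspection of the explicit matrix $Q_\chi$ combined with the skewness identity $\hat{B}(\chi) + \hat{B}(\chi^{-1}) = -1 \not\equiv 0 \pmod{\p}$ shows that $Q_\chi \bmod \p$ is nonzero; hence $Q_\chi \bmod \p$ has rank exactly $1$ and local Smith form $\mrm{diag}(1, \p^v\cdot u)$ for a unit $u$. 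Therefore each $W_\chi$ contributes a single cyclic summand $\bb{Z}_\p[\zeta_{2\la+1}]/\p^v$ to the $\p$-primary part of $\C \otimes \bb{Z}_\p[\zeta_{2\la+1}]$, while $W_1$ contributes none. Summing the $2\la$ identical nontrivial contributions and applying uniqueness of invariant factors over $\bb{Z}_\p$---equivalently, faithfully-flat descent along the free extension $\bb{Z}_\p[\zeta_{2\la+1}]/\bb{Z}_\p$---gives $\C_{2,\p} \cong (\bb{Z}/\p^v\bb{Z})^{2\la}$, and combining over all $\p \mid 4\la+3$ yields the stated structure and $\p$-rank formula.

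The main obstacle is the uniform collapse $\det Q_\chi = (\la+1)(4\la+3)$: this requires recognizing how the two skewness identities and the SDF difference identity interact to eliminate all $\chi$-dependence in the $2\times 2$ determinant. Once this clean formula is established, the rank-$1$ deficiency modulo $\p$ is immediate, and the passage from the block-diagonalization over $\bb{Z}_\p[\zeta_{2\la+1}]$ to the integral invariant-factor structure via faithfully-flat descent is largely formal.
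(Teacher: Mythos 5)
Your proposal is correct and follows essentially the same route as the paper: the same $G$-action fixing $v_{0}$, the same character decomposition of $\R^{V}$ into one $3$-dimensional trivial block (with local Smith form $\mrm{diag}(1,1,0)$) and $2\la$ two-dimensional blocks $Q_{\chi}$ each with local Smith form $\mrm{diag}(1,4\la+3)$. The only (harmless) divergences are that you get $\det Q_{\chi}=(\la+1)(4\la+3)$ directly from the SDF identities (or from restricting $QQ^{\intercal}$), whereas the paper computes $\mathrm{Tr}(Q_{\chi})=4\la+3$ and matches the eigenvalues of $Q_{\chi}$ against the known spectrum of $Q$; and the unit entry of $Q_{\chi}\bmod\p$ is most directly seen from the fact that the two off-diagonal entries of the \emph{same} block are $-\hat{B}(\chi)$ and $-1-\hat{B}(\chi)$, hence differ by a unit (the identity $\hat{B}(\chi)+\hat{B}(\chi^{-1})=-1$ you cite relates entries living in the blocks of $\chi$ and $\chi^{-1}$ respectively, so it is not quite the operative fact, though it comes from the same skewness partition).
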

The result below describes the critical group of $\W$. We prove this in \S\ref{wc}.
\begin{theorem}\label{wwc}
Let $\la$ be a positive integer and let $(A,B,C,D)$ be an SDF in an additive abelian $G$ with $|G|=2\la+1$. Let $Q$ denote the Laplacian matrix  of $\W$ and by $\C$ we denote its critical group. Then $\C= \left(\bb{Z}/(2\la+2)\bb{Z}\right)^{4\la+3} \oplus \left(\bb{Z}/(8\la+7)\bb{Z}\right)^{4\la+2}$. Let $\p$ be a prime and let $\rk(Q)$ denote the $\p$-rank of $Q$. If $\p \mid \la+1$, then $\rk(Q)=4\la+3$; and if $\p \mid 4\la+3$, then $\rk(Q)=4\la+4$.  
\end{theorem}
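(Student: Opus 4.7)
The plan is to combine Theorem~\ref{drtc} with a direct $\p$-rank computation for primes $\p$ dividing $8\la+7$. Applying Theorem~\ref{drtc} to $\W$, which is a DRT with parameters $(4\la'+3, 2\la'+1, \la')$ where $\la' = 2\la+1$, yields $\C = (\bb{Z}/(2\la+2)\bb{Z})^{4\la+3} \oplus \C_2$ with $|\C_2| = (8\la+7)^{4\la+2}$. Since $\gcd(2\la+2, 8\la+7) = 1$, the group $\C_2$ is exactly the direct sum of the Sylow $\p$-subgroups of $\C$ for primes $\p \mid 8\la+7$. Using $\mathrm{coker}(Q) = \bb{Z} \oplus \C$, one has $\dim_{\bb{F}_\p}(\C \otimes \bb{F}_\p) = 8\la+7 - 1 - \rk(Q)$, so the statement $\rk(Q) = 4\la+3$ for $\p \mid \la+1$ falls out at once from the decomposition of $\C_1$ in Theorem~\ref{drtc}, and showing $\C_2 \cong (\bb{Z}/(8\la+7)\bb{Z})^{4\la+2}$ is equivalent to proving $\rk(Q) = 4\la+4$ for every prime $\p \mid 8\la+7$.

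To carry out this $\p$-rank computation, I would use the natural action of $G$ on the vertex set of $\W$, under which $v_1, v_2, v_3$ are fixed and each $V_\mu$ for $\mu \in \{a, b, c, d\}$ is a regular $G$-orbit (the $c_{z-g}$ term in \eqref{wwa} forcing a twist on $V_c$), and with respect to which the Laplacian is $G$-equivariant. Since $8\la+7 = 4(2\la+1)+3$, we have $\gcd(|G|, 8\la+7) \mid 3$, so for $\p \mid 8\la+7$ with $\p \neq 3$ we may localize over a ring $R \supseteq \bb{Z}_\p$ containing a primitive $|G|$-th root of unity. Then $R \otimes \bb{Z}^V$ splits as the direct sum of the $G$-isotypes, and correspondingly $Q$ splits as a $7 \times 7$ block $Q_{\mathrm{triv}}$ on the trivial isotype (spanned by $v_1, v_2, v_3$ together with $s_\mu := \sum_g \mu_g$ for $\mu \in \{a,b,c,d\}$) plus one $4 \times 4$ block $Q_\chi$ for each nontrivial character $\chi$ of $G$, with entries that are affine functions of the four character sums $\chi(A), \chi(B), \chi(C), \chi(D)$.

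The trivial block $Q_{\mathrm{triv}}$ is computed explicitly by applying the formulas in \eqref{wwa} to each $s_\mu$; a direct mod-$\p$ row/column reduction yields $\rk(Q_{\mathrm{triv}}) = 4$ whenever $\p \mid 8\la+7$. For each nontrivial $\chi$, I would exploit the SDF identities --- condition (iii), $B_i \cup (-B_i) = G\setminus\{0_G\}$, giving $\chi(B_i) + \overline{\chi(B_i)} = -1$ for each of the four blocks $B_i \in \{A, B, C, D\}$, combined with condition (i) giving $\sum_i |\chi(B_i)|^2$ a fixed integer depending only on $\la$ --- to show that $\det(Q_\chi)$ is divisible by $(8\la+7)^2$ and that $Q_\chi$ drops rank by exactly $2$ modulo $\p$. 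Summing over the $2\la$ nontrivial characters gives $\rk(Q) = 4 + 2 \cdot 2\la = 4\la+4$, as required. The edge case $\p = 3$ with $3 \mid |G|$ (equivalently $\la \equiv 1 \pmod 3$) falls outside the naive character-decomposition argument, but can be handled by lifting the relevant identities to characteristic zero, computing over a $3$-adic local ring, and reducing; the same character-sum identities remain valid in this lifted setup.

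The main obstacle is the $4 \times 4$ determinant computation for $Q_\chi$. The four blocks of the SDF contribute sixteen character sums into the entries of $Q_\chi$, and the twisted action on $V_c$ pairs contributions indexed by $\chi$ with those indexed by $\chi^{-1}$, so a careful choice of basis is needed to exhibit the factor $(8\la+7)^2$ in $\det(Q_\chi)$. Once this factorization is in hand, the computation closely parallels that for $\SZ$ in Theorem~\ref{szc} (which has only two SDF blocks and a $3 \times 3$ trivial block), but with four blocks, a larger trivial block, and the additional twist on $V_c$.
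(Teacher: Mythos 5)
Your overall architecture (Theorem \ref{drtc} for the $(\la+1)$-part, then a local computation at primes $\p\mid 8\la+7$ using the $G$-action and character vectors) is the paper's architecture, but the central step of your plan --- splitting $Q$ into one $4\times 4$ block per nontrivial character --- is not available. The natural $G$-action on $\W$ does \emph{not} preserve adjacency: the ``reversed'' terms $c_{z-g}$, $d_{z-g}$, $a_{z-g}$, $b_{z-g}$ in \eqref{wwa} occur in a cyclic pattern across all four orbits, so no choice of signs (twisting $V_c$ alone, or any $V_\mu$) makes $\nu_Q$ a $G$-map; one can check that equivariance of the $a\to c$ terms forces the twist on $V_c$ to be opposite to that on $V_a$, while the $c\to b$ terms force it to agree with that on $V_b=V_a$. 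What survives is weaker: in the basis $e_{(\mu,\chi)}$ the operator $\nu_Q$ carries the $\chi$-isotype into the span of the $\chi$- and $\chi^{-1}$-isotypes (Lemma \ref{imww}), so the minimal invariant blocks are the $8\times 8$ matrices $Q_\chi$ of \eqref{mat} on $N_\chi=\langle e_{(\mu,\chi)},e_{(\mu,\chi^{-1})}\rangle$, one for each of the $\la$ pairs $\{\chi,\chi^{-1}\}$, together with a $7\times 7$ trivial block. You half-notice the pairing (``pairs contributions indexed by $\chi$ with those indexed by $\chi^{-1}$'') but still propose to compute a $4\times 4$ determinant divisible by $(8\la+7)^2$ with rank drop $2$; that $4\times4$ object is not an invariant block and its determinant is not defined. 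The correct unit of computation is the $8\times 8$ block, whose rank drop mod $\p$ is $4$; the paper gets this from the identity $Q_\chi\ov{Q_\chi^{\intercal}}=(8\la+7)(2\la+2)I$ together with two explicit $4\times 4$ minors $m_1,m_2$ whose square roots sum to a unit mod $\p$, so that $d_4(Q_\chi)$ is a unit and Lemma \ref{minor} applies.

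There is a second gap: reducing the determination of $\C_2$ to ``$\rk(Q)=4\la+4$ for every $\p\mid 8\la+7$'' is only valid when $8\la+7$ is squarefree. If $\p^2\mid 8\la+7$, the $\p$-rank fixes the number of nontrivial $\p$-elementary divisors but not their exponents; e.g.\ $\left(\bb{Z}/\p\bb{Z}\right)^{a}\oplus\left(\bb{Z}/\p^{3}\bb{Z}\right)^{a}$ has the same order and the same number of generators as $\left(\bb{Z}/\p^{2}\bb{Z}\right)^{2a}$. Since the theorem is stated for arbitrary four-block SDFs, this case is not excluded, and the extra input $Q_\chi\ov{Q_\chi^{\intercal}}=(8\la+7)(2\la+2)I$ (equivalently, equation \eqref{me}) combined with Lemma \ref{snfprod} is exactly what caps each invariant factor at $\p^{v_\p(8\la+7)}$ and forces all four nontrivial elementary divisors of each block to equal $\p^{v_\p(8\la+7)}$. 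Finally, your worry about $\p=3$ dividing $|G|$ is legitimate (it occurs, e.g.\ for $|G|=9$), but ``lift to characteristic zero and reduce'' is not a repair: the isotypic decomposition itself fails when $\p$ divides $|G|$, and this case needs a genuinely separate argument (the paper is silent on it as well).
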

The following describes the critical group of $\Pa$. We prove this in \S\ref{pc}.
\begin{theorem}\label{ptc}
Let $p$ be a prime and $t$ be a positive integer such that $q:=p^{t}\equiv 3 \pmod{4}$. Let $Q$ denote the Laplacian matrix  of $\Pa$ and by $\C$ we denote its critical group. Then the $p$-rank of $Q$ is $\left(\dfrac{p+1}{2}\right)^{t}$ and 
$\C=\left(\bb{Z}/\mu\bb{Z}\right)^{2\mu}\bigoplus\limits_{i=1}^{t} \left(\bb{Z}/p^i \bb{Z}\right)^{e_{i}}$, where
\begin{enumerate}
\item $\mu=\dfrac{q-1}{4}$;
\item $e_{t}= \left(\dfrac{p+1}{2}\right)^{t}-2$;
\item and for $1\leq i <t$, $$e_{i}= \sum\limits_{j=0}^{min\{i, t-i\}} \dfrac{t}{t-j} {t-j \choose j} {t-2j \choose i-j} (-p)^{j} \left(\dfrac{p+1}{2}\right)^{t-2j}.$$
\end{enumerate} 
\end{theorem}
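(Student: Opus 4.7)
The plan is to exploit Theorem~\ref{drtc} to handle the part of $\C$ coprime to $p$, and then follow the Chandler--Sin--Xiang argument \cite{CSX} for the $p$-primary part. Since $\Pa$ is a Cayley graph on $(\bb{F}_q,+)$ with connection set $H$, the Laplacian $Q = \frac{q-1}{2}I - M$ is translation-invariant, so the critical group is a $\bb{Z}[\bb{F}_q^+]$-module and the additive characters diagonalize $Q$ over $\bb{C}$, with eigenvalues $0$ and $\mu_a = (q - \psi(a)g(\psi))/2$ for $a \in \bb{F}_q^*$. Here $g(\psi)$ is the quadratic Gauss sum and $g(\psi)^2 = \psi(-1)q = -q$ since $q \equiv 3 \pmod 4$. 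Setting $\la = (q-3)/4$, Theorem~\ref{drtc} gives $\C = \C_1 \oplus \C_2$ with $\C_1 = (\bb{Z}/(\la+1)\bb{Z})^{2\la+1}$ of order coprime to $p$; this accounts for the $(\bb{Z}/\mu\bb{Z})^{2\mu}$ factor, so the task reduces to identifying the $p$-adic elementary divisors of $Q$ on the complementary summand.

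For the $p$-part I would follow \cite{CSX} essentially verbatim. Choose an $\bb{F}_p$-basis of $\bb{F}_q$ and identify $\bb{F}_p[\bb{F}_q^+] \cong \bb{F}_p[y_1,\ldots,y_t]/(y_i^p)$ via $y_i = x_i - 1$, a truncated polynomial ring of dimension $q$. Lifting to $\bb{Z}_p[\zeta_p]$, the eigenvalues $\mu_a$ appear and their $p$-adic valuations are computed by Stickelberger's theorem applied to the Gauss sum $g(\psi)$ and to the Jacobi sums $J(\psi,\chi_a^r)$ that arise when combining several characters. A character $\chi_a$ contributes a unit eigenvalue mod $p$ precisely when the $p$-adic digit vector $(b_1,\ldots,b_t) \in \{0,\ldots,p-1\}^t$ associated with $a$ (relative to a fixed generator of $\bb{F}_q^*$) lies in a prescribed ``positive half-space,'' and the number of such vectors is $((p+1)/2)^t$, yielding the claimed $p$-rank.

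For the finer elementary divisor formula, stratify the characters by the exact $p$-adic valuation of $\mu_a$. This valuation is controlled by a Lucas-type digit rule: pairs of digits in the Frobenius orbit that ``cancel'' under the quadratic twist contribute extra $p$ factors. The binomial $\binom{t-j}{j}\binom{t-2j}{i-j}$ enumerates configurations with $j$ canceling pairs and $i-j$ additional single-step jumps, the factor $t/(t-j)$ is the Frobenius-orbit correction familiar from counting primitive necklaces, and $(-p)^j$ records the sign and $p$-power contribution of the paired positions. The main obstacle is not computational novelty but careful sign tracking: the sole divergence from \cite{CSX} is that $\psi(-1) = -1$ here rather than $+1$, and this sign propagates through every Gauss and Jacobi sum identity used along the way. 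Since the Smith normal form at $p$ depends only on $p$-adic valuations (which are insensitive to signs), the final $e_i$ formula is unchanged, but verifying that the sign changes cancel compatibly at each step constitutes the technical core of the adaptation.
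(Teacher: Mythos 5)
Your handling of the prime-to-$p$ part via Theorem \ref{drtc} matches the paper, and your closing observation that the only divergence from \cite{CSX} is the sign $\psi(-1)=-1$ (which cannot affect $p$-adic valuations) is also the paper's point of view. The gap is in the route you propose for the $p$-primary part. You diagonalize $Q$ by \emph{additive} characters and propose to read off the elementary divisors from the $p$-adic valuations of the eigenvalues $\mu_a=(q-\psi(a)g(\psi))/2$. This cannot work. First, since $\overline{g(\psi)}=\psi(-1)g(\psi)$ forces $g(\psi)^2=-q$, every nonzero eigenvalue satisfies $v_p(\mu_a)=t/2$; stratifying characters by $v_p(\mu_a)$ therefore produces a single stratum and cannot yield the graded multiplicities $e_1,\dots,e_t$ (worse, $t$ is necessarily odd when $p^t\equiv 3\pmod 4$, so $t/2$ is not even an integer). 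Second, and more fundamentally, the additive characters take values in the $p$-th roots of unity, so the eigenbasis exists only over the \emph{ramified} extension $\bb{Z}_{p}[\zeta_p]$, where the change-of-basis (Fourier) matrix $F$ is not unimodular ($F\overline{F}^{\intercal}=qI$); eigenvalue valuations computed there are not the elementary divisors of $Q$ over $\bb{Z}_{p}$. The paper's reduction of SNF computations to local data is stated only for \emph{unramified} extensions, and that restriction is essential here.

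The mechanism actually used in the paper (following \cite{CSX}) is multiplicative: $|K^{\times}|=q-1$ is coprime to $p$, so the powers $T^i$ of the Teichm\"uller character take values in the unramified degree-$t$ extension $R$ of $\bb{Z}_{p}$, and $R^{K}$ splits over $R$ into rank-$2$ pieces $Rf_i\oplus Rf_{i+k}$ (plus a rank-$3$ piece for the trivial character of $H$). On these pieces $\nu_Q$ acts triangularly with off-diagonal entry the Jacobi sum $J(T^{-i},T^{k})$, so $Q$ is similar over $R$ to a diagonal matrix of Jacobi sums; Stickelberger's theorem converts $v_p(J(T^{-i},T^{k}))$ into the carry count $c(i)$, and the transfer-matrix count of $\{i:\ c(i)=a\}$ gives $e_a$. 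Your ``Jacobi sums $J(\psi,\chi_a^r)$'' with $\chi_a$ an additive character do not parse (Jacobi sums take two multiplicative characters), and your combinatorial gloss on $\tfrac{t}{t-j}\binom{t-j}{j}\binom{t-2j}{i-j}$ in terms of ``canceling pairs in the Frobenius orbit'' is an invented narrative rather than the actual derivation, which counts cyclic carry patterns when adding $k=\tfrac{q-1}{2}$ (all digits equal to $\tfrac{p-1}{2}$) to $i$ modulo $q-1$. To repair the argument you must replace the additive-character/Gauss-sum framework by the multiplicative isotypic decomposition and the Jacobi-sum diagonal form.
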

\begin{remark}
Let $q$ be a prime power satisfying $q\equiv 3 \pmod 4$. Proposition \ref{SZ} provides us with an SDF with two blocks in $G:=\bb{Z}/n\bb{Z}$, where $n=\frac{q-1}{2}$. Let $(A,B)$ be an SDF in $G$. Both $\Pa$ and $\SZ$ are DRT's with parameters $({q}, ({q}-1)/2, ({q}-3)/4)$. Theorems \ref{szc} and \ref{ptc} show that these graphs have non isomorphic critical groups. Therefore these graphs are not isomorphic and thus the associated Hadamard matrices are not equivalent.  
\end{remark}
\begin{remark}
Let $\tilde{q}$ be a prime power such that $\tilde{q}\equiv 5 \pmod{8}$. Proposition \ref{MX}  guarantees the existence of an SDF with two blocks in $(\bb{F}_{q},+)$.
Let $(A,B)$ be an SDF in $(\bb{F}_{\tilde{q}},+)$.
Let's also assume that
$q=2\tilde{q}+1$ is also a power of a prime. Theorems \ref{szc} and \ref{ptc} show that that $\mathcal{SZ}(\bb{F}_{\tilde{q}},A,B)$ and $\mathcal{P}(q)$ are not isomorphic and thus the associated Hadamard matrices are not equivalent.   
\end{remark}
\begin{remark}
Let $\tilde{q}$ be a prime power such that $\tilde{q}\equiv 9 \pmod{16}$. Proposition \ref{MX}  guarantees the existence of an SDF with four blocks in $(\bb{F}_{q},+)$.
Let $(A,B,C,D)$ be an SDF in $(\bb{F}_{\tilde{q}},+)$.
 Let's also assume that
$q=4\tilde{q}+3$ is also a power of a prime. Theorems \ref{wwc} and \ref{ptc} show that that $\mathcal{W}(\bb{F}_{\tilde{q}},A,B,C,D)$ and $\mathcal{P}(q)$ are not isomorphic and thus the associated Hadamard matrices are not equivalent.   
\end{remark}

\begin{remark}
We found that the critical groups of $\SZ$ and $\W$ depend only on the order of $G$. However this is not the case for DRTs constructed from skew Hadamard difference sets. Let $q \equiv 3 \pmod{4}$ be a prime power. To construct $\Pa$, the set $H$ of quadratic residues in $(\bb{F}_{q},+)$ was used. Another example of skew Hadamard difference set is the set $DY(1)=\{x^{10}-x^6-x^{2}| \ x \in \bb{F}_{3^{n}}^{\times}\}$ in the additive group $(\bb{F}_{3^{n}},+)$, with $n$ odd. This was constructed by Ding and Yuan \cite{DY}. 
By $DRT(3^{n}, DY(1))$, we denote the DRT with vertex set $\{[x]| \ x \in \bb{F}_{3^{n}}\}$ 
and arc set $\{([x],[y])|\ y-x \in DY(1)\}$. With the help of a computer, we can find that the SNFs of the Laplacians of $DRT(3^{5}, DY(1))$ and $\mathcal{P}(243)$ are different. It was conjectured in \cite{DWX} that there are at least five inequvivalent difference sets in $(\bb{F}_{3^{n}},+)$ for all odd $n>3$. 
\end{remark}
\section{Preliminaries}\label{def}
\subsection{Smith Normal Forms.}
Let $\mathfrak{R}$ be a Principal Ideal Domain and $Z:\mathfrak{R}^{m} \to \mathfrak{R}^{n}$ be a linear transformation. 
By the structure theorem for finitely generated modules over PIDs, we have $\{s_{i}(Z)\}_{i=1}^{r} \subset \mathfrak{R} \setminus \{0\}$ such that $s_{i}(Z) \mid s_{i+1}(Z)$ and 
$$\mathrm{coker}(Z)\cong \mathfrak{R}^{n-r}\oplus \bigoplus\limits_{i=1}^{r} \mathfrak{R} /s_{i}(Z)\mathfrak{R}.$$  
Let $[Z]$ denote the matrix representation of $Z$ with respect to the standard bases. Then the above equation tells us that we can find $P \in \mathrm{GL}_{n}(\mathfrak{R})$, and $Q \in \mathrm{GL}_{m}(\mathfrak{R})$ such that 
$$P[Z]Q=\left[
\begin{array}{c | c}
Y & O_{(r \times n-r)} \\
\hline
O_{(m-r \times r)} & O_{(n-r \times n-r)}
\end{array}
\right],$$ 
where $Y=\mrm{diag}(s_{1}(Z), \ldots ,\ s_{r}(Z))$. The diagonal form $P[Z]Q$ is called the Smith normal form (SNF) of $Z$. Its uniqueness (up to multiplication of $s_{i}(Z)$'s by units) is also guaranteed by the aforementioned structure theorem. By invariant factors (elementary divisors) of $Z$, we mean the invariant factors (respectively elementary divisors) of the module $\mathrm{coker}(Z)$. In this section, we collect some useful results about Smith normal forms.

The following is a well known result (for eg. see Theorem $2.4$ of \cite{sta}) that gives a description of the Smith normal form in terms of minor determinants.

\begin{lemma}\label{minor}
Let $Z$, $[Z]$, and $\{s_{i}(Z)\}_{1\leq i \leq r}$ be as described above.
Given $1 \leq i \leq r$, let $d_{i}(Z)$ be the GCD of all $i\times i$ minor determinants of $[Z]$, and let $d_{0}(Z)=1$. We then have $s_{i}(Z)=d_{i}([Z])/d_{i-1}([Z])$. 
\end{lemma}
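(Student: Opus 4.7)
The plan is to prove this by showing that the sequence of GCDs of minor determinants is invariant under left and right multiplication by invertible matrices over $\mathfrak{R}$, and then to read off the GCDs from the Smith normal form itself, where the computation becomes trivial.

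First I would establish the key invariance statement: if $[Z']=P[Z]Q$ with $P\in\mathrm{GL}_{n}(\mathfrak{R})$ and $Q\in\mathrm{GL}_{m}(\mathfrak{R})$, then $d_{i}([Z'])=d_{i}([Z])$ for every $i$. The cleanest way is to invoke the Cauchy--Binet formula: each $i\times i$ minor of $P[Z]Q$ is an $\mathfrak{R}$-linear combination of $i\times i$ minors of $[Z]$, so $d_{i}([Z])$ divides $d_{i}(P[Z]Q)$. Applying the same observation to $[Z]=P^{-1}[Z']Q^{-1}$ gives the reverse divisibility, hence equality. (If one prefers to avoid Cauchy--Binet, the same conclusion follows by first proving that elementary row/column operations over a PID preserve $d_{i}$, and then using that every element of $\mathrm{GL}_{n}(\mathfrak{R})$ need not factor into elementaries in general, so the Cauchy--Binet route is safer.)

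Next I would apply this invariance to the Smith normal form. By assumption there exist invertible $P,Q$ with
\[
P[Z]Q \;=\; \begin{pmatrix} Y & O \\ O & O \end{pmatrix}, \qquad Y=\mathrm{diag}(s_{1}(Z),\ldots,s_{r}(Z)),
\]
and $s_{i}(Z)\mid s_{i+1}(Z)$. The $i\times i$ minors of this block diagonal matrix are either zero or of the form $s_{j_{1}}(Z)s_{j_{2}}(Z)\cdots s_{j_{i}}(Z)$ for some $1\leq j_{1}<\cdots<j_{i}\leq r$. Using the divisibility chain $s_{1}(Z)\mid s_{2}(Z)\mid\cdots\mid s_{r}(Z)$, every such product is a multiple of $s_{1}(Z)s_{2}(Z)\cdots s_{i}(Z)$, and this product is itself one of the minors (the leading principal one). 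Therefore
\[
d_{i}([Z]) \;=\; s_{1}(Z)\,s_{2}(Z)\cdots s_{i}(Z),
\]
up to units, with the convention $d_{0}([Z])=1$.

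Finally, taking the ratio $d_{i}([Z])/d_{i-1}([Z])=s_{i}(Z)$ yields the claimed formula. The only step that could trip one up is the invariance under $\mathrm{GL}$, because an invertible matrix over a general PID is not a product of elementary matrices; handling this correctly via Cauchy--Binet is the one substantive ingredient, and everything else is formal bookkeeping using the divisibility of consecutive invariant factors.
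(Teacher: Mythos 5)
Your proof is correct, and it is the standard argument: the paper itself gives no proof of this lemma, simply citing it as a well-known result (Theorem 2.4 of the Stanley survey), and your Cauchy--Binet invariance of the minor GCDs followed by direct evaluation on the diagonal form is exactly the textbook route. Your side remark correctly identifies the one real subtlety (invertible matrices over a general PID need not be products of elementary matrices), and handling it via Cauchy--Binet is the right call.
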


The following result which is Theorem 1 of \cite{MN71} gives a relation between SNF of the product of two matrices and the SNFs of the individual matrices.

\begin{lemma}\label{snfprod}
Let $\mathfrak{R}$ be a principal ideal domain. Given $M \in M_{n}(\mathfrak{R})$ and $1\leq k\leq n$, by $s_{k}(M)$ we denote the $k$th invariant factor of $M$. If $A,\ B \in M_{n}(\mathfrak{R})$, then for $1\leq k \leq n$ we have $s_{k}(A)\mid s_{k}(AB)$ and $s_{k}(B) \mid s_{k}(AB)$.
\end{lemma}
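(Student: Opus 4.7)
The plan is to prove the divisibility $s_k(A)\mid s_k(AB)$ first; the companion statement $s_k(B)\mid s_k(AB)$ will then follow by applying the same argument to the transposed product $(AB)^\intercal = B^\intercal A^\intercal$, since the Smith normal form, and hence each invariant factor, is preserved under transposition: if $PAQ$ is in SNF, then $Q^\intercal A^\intercal P^\intercal$ is the corresponding SNF of $A^\intercal$. For the first divisibility I would pass to cokernels. Viewing $A$ and $AB$ as $\mathfrak{R}$-linear endomorphisms of $\mathfrak{R}^n$, the inclusion $\mrm{image}(AB)\subseteq \mrm{image}(A)$ lets the identity of the target $\mathfrak{R}^n$ descend to a surjection $\pi:\mrm{coker}(AB)\twoheadrightarrow \mrm{coker}(A)$. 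By the structure theorem, the invariant factors of $\mrm{coker}(A)$ and $\mrm{coker}(AB)$ are precisely the $s_k(A)$ and $s_k(AB)$ (with any free summands recorded under the convention $s_k=0$).

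The problem then reduces to a purely module-theoretic sub-lemma: if $\pi:M\twoheadrightarrow N$ is a surjection of finitely generated $\mathfrak{R}$-modules with invariant factors $m_1\mid\cdots\mid m_n$ and $n_1\mid\cdots\mid n_n$ (padded to equal length as needed), then $n_i\mid m_i$ for every $i$. I would prove this one prime at a time. For each prime $p\in\mathfrak{R}$ and each $j\geq 0$, surjectivity of $\pi$ carries $p^jM$ onto $p^jN$ and therefore induces a surjection of $\mathfrak{R}/p$-vector spaces
$$p^jM/p^{j+1}M \twoheadrightarrow p^jN/p^{j+1}N.$$
A direct computation on each cyclic summand $\mathfrak{R}/p^a$ (with free summands treated as $a=\infty$) shows that $\dim_{\mathfrak{R}/p}(p^jM/p^{j+1}M)$ equals $\#\{i : v_p(m_i)>j\}$, and analogously for $N$. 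Thus for every $j\geq 0$,
$$\#\{i : v_p(n_i)>j\} \leq \#\{i : v_p(m_i)>j\},$$
which after sorting the $p$-valuations in non-decreasing order is combinatorially equivalent to $v_p(n_i)\leq v_p(m_i)$ for all $i$. Collecting this at every prime $p$ yields $n_i\mid m_i$.

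Applying the sub-lemma to the surjection $\pi$ gives $s_k(A)\mid s_k(AB)$, and the transpose manoeuvre then produces $s_k(B)\mid s_k(AB)$. The main technical obstacle I anticipate is the sub-lemma itself: the cokernel reduction is essentially immediate, but translating the surjection into the dimension inequalities above, verifying the cyclic-summand computation, and recognising the resulting count inequalities as term-by-term divisibility of invariant factors requires careful bookkeeping of valuations and orderings. Once that is in place, the rest of the proof assembles in a line.
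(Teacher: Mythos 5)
Your proof is correct. Note that the paper does not actually prove this lemma: it simply cites it as Theorem 1 of \cite{MN71}, so your write-up supplies an argument where the source only gives a reference. The classical proofs in the literature are matrix-theoretic, typically running through compound matrices or the minor characterization of Lemma \ref{minor} combined with Cauchy--Binet, or through explicit manipulation of unimodular factorizations; your route is instead module-theoretic, reducing everything to the fact that a surjection $M\twoheadrightarrow N$ of finitely generated modules over a PID forces termwise divisibility $n_i\mid m_i$ of invariant factors, which you establish by comparing the $\mathfrak{R}/p$-dimensions of the layers $p^jM/p^{j+1}M$ and $p^jN/p^{j+1}N$. This is clean and self-contained, and it correctly handles the singular case (the convention $s_k=0$ for $k$ beyond the rank), which matters in the paper's application since the Laplacian is singular. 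Two small points. First, you are right that the second divisibility genuinely requires the transpose manoeuvre (or some dual argument), since $\mathrm{image}(AB)\subseteq\mathrm{image}(B)$ fails in general; you flagged and handled this. Second, in the sub-lemma no actual re-sorting is needed: the divisibility chains $m_1\mid\cdots\mid m_n$ and $n_1\mid\cdots\mid n_n$ already order the $p$-valuations non-decreasingly for every prime simultaneously, which is exactly what lets you convert the counting inequalities $\#\{i: v_p(n_i)>j\}\leq\#\{i: v_p(m_i)>j\}$ into $v_p(n_i)\leq v_p(m_i)$ with one common indexing and then assemble over all primes into $n_i\mid m_i$. With that observation made explicit, the proof is complete.
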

Consider a prime $\p \in \mathfrak{R}$ and a square matrix $N$ with entries in $\mathfrak{R}$, whose SNF over $\mathfrak{R}$ is 

$\mrm{diag}(s_{1}(N),\ldots, s_{i}(N), \ldots s_{n}(N))$.

Let $\mathcal{S}_{\p}$ be any unramified extension of the local ring $\mathfrak{R}_{\p}$. 
If $\mrm{diag}(\p^{j_{1}},\ldots, \p^{j_{i}},\ldots, \p^{j_{n}})$ is the SNF of $N$ considered as a matrix over $\mathcal{S}_{\p}$, then $\p^{j_{i}} || s_{i}(N)$. So  while finding Smith normal forms, we can focus on one prime at a time.
\subsection{Properties of DRTs.}
Let $\la$ be a positive integer. By $M$ be we denote the adjacency matrix of a DRT with parameters $(4\lambda +3, 2\la+1, \la)$, then $Q:=(2\la +1)I-M$ is its Laplacian matrix.
Using the definition of DRTs, we can easily deduce that $M+\B=J-I$ and $M \B =(\la +1)I+\la J$. Thus we have
\begin{equation}\label{ae}
 Q+Q^{\intercal}=(4\la+3)I-J
 \end{equation} and 
\begin{equation}\label{me} 
 QQ^{\intercal}=(4\la +3)(\la +1)I-(\la+1)J.
 \end{equation} 

The following is a well know result about adjacency matrices of DRTs.
\begin{lemma}\label{linpar}
Let $\la$ be a positive integer and let $\Gamma$ be a DRT with parameters $(4\la+3,2\la+1, \la)$.
Let $M$ and $Q$ be the adjaceny matrix and Laplacian matrix respectively, of $\Gamma$. By $\C$ we denote the critical group of $\Gamma$. Then
\begin{enumerate}[label=\roman*]
\item $(x-k)(x^{2}+x+\la+1)^{2\la+1}$ is the characteristic polynomial of $M$;
\item the eigenvalues of $Q$ are $0$, $\dfrac{4\la+3-\left(\sqrt{4\la+3}\right)i}{2}$, and $\dfrac{4\la+3+\left(\sqrt{4\la+3}\right)i}{2}$, with multiplicities $1$, $k$, and $k$ respectively;
\item and $|\C|=(4\la+3)^{2\la}(\la+1)^{2\la+1}$.
\end{enumerate} 
\end{lemma}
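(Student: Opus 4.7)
The plan is to extract a quadratic polynomial satisfied by $M$ on the orthogonal complement of $\mathbf{1}$ from the two identities already recorded in the excerpt, and then read off everything from there.

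First I would combine $M+\B=J-I$ with $MM^{\intercal}=(\la+1)I+\la J$. Substituting $M^{\intercal}=J-I-M$ into the second identity and using the row-sum condition $MJ=kJ=(2\la+1)J$ produces
\begin{equation*}
M^{2}+M+(\la+1)I=(\la+1)J.
\end{equation*}
From this the eigenstructure of $M$ is immediate. On the line spanned by $\mathbf{1}$ we have $M\mathbf{1}=k\mathbf{1}$, so $k$ is an eigenvalue with multiplicity at least one. On the orthocomplement $\mathbf{1}^{\perp}$, the matrix $J$ acts as zero, and the identity above forces every such eigenvector $v$ to satisfy $M^{2}v+Mv+(\la+1)v=0$, i.e.\ the corresponding eigenvalues are the roots $\tfrac{-1\pm i\sqrt{4\la+3}}{2}$ of $x^{2}+x+(\la+1)$. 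Since $M$ is real and these two roots are complex conjugate, they occur with equal multiplicity on the $(4\la+2)$-dimensional space $\mathbf{1}^{\perp}$; hence each has multiplicity $2\la+1$. This yields (i): the characteristic polynomial is $(x-k)(x^{2}+x+\la+1)^{2\la+1}$.

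Part (ii) then follows by applying the affine map $\mu\mapsto k-\mu$, since $Q=kI-M$. The eigenvalue $k$ of $M$ goes to $0$, and the two complex roots go to $\tfrac{4\la+3\mp i\sqrt{4\la+3}}{2}$, each with multiplicity $k=2\la+1$.

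For (iii) I would use the matrix-tree theorem together with the fact (noted in the excerpt) that a DRT is a directed Eulerian graph, so $\det(Q_{v})$ is independent of $v$ and equals $|\C|$. Summing over all vertices, $n\det(Q_{v})$ equals the sum of all principal $(n-1)\times(n-1)$ minors of $Q$, which is the second-highest coefficient of the characteristic polynomial of $Q$, and therefore equals the product of the nonzero eigenvalues of $Q$. That product is
\begin{equation*}
\left(\tfrac{4\la+3- i\sqrt{4\la+3}}{2}\cdot\tfrac{4\la+3+ i\sqrt{4\la+3}}{2}\right)^{2\la+1}=\bigl((4\la+3)(\la+1)\bigr)^{2\la+1},
\end{equation*}
so dividing by $n=4\la+3$ gives $|\C|=(4\la+3)^{2\la}(\la+1)^{2\la+1}$, as claimed.

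The only non-routine point I anticipate is justifying that the principal $(n-1)$-minors of $Q$ are all equal so that one may pass from their sum to a single value; this is exactly the Eulerian property invoked earlier in the excerpt (each vertex has in-degree and out-degree $k$), so it can simply be cited. Everything else is algebraic manipulation of the two identities \eqref{ae} and \eqref{me}.
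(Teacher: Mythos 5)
Your proof is correct, but it reaches part (i) by a genuinely different mechanism than the paper. The paper multiplies the two characteristic polynomials: from $M+M^{\intercal}=J-I$ and $MM^{\intercal}=(\lambda+1)I+\lambda J$ it gets $\det(xI-M)\det(xI-M^{\intercal})=\det\bigl((x^{2}+x+\lambda+1)I+(\lambda-x)J\bigr)$, evaluates this with the closed form $\det(aI+bJ)=(a+(4\lambda+3)b)a^{4\lambda+2}$, and extracts the square root of the resulting $(x-k)^{2}(x^{2}+x+\lambda+1)^{4\lambda+2}$. You instead derive the quadratic identity $M^{2}+M+(\lambda+1)I=(\lambda+1)J$ and read the spectrum off the $M$-invariant decomposition $\mathbb{R}\mathbf{1}\oplus\mathbf{1}^{\perp}$; note that the invariance of $\mathbf{1}^{\perp}$ under the non-symmetric $M$ rests on the column sums of $M$ also being $k$ (in-degree $=$ out-degree in a regular tournament), which you should state at that point rather than only later. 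Your route yields slightly more (diagonalizability on $\mathbf{1}^{\perp}$) and avoids the determinant computation, at the cost of the conjugate-pair multiplicity argument; the paper's route gets the multiplicities for free from the square root. Parts (ii) and (iii) are handled identically in spirit: the paper simply asserts that (i) implies (ii) and that the matrix-tree theorem gives (iii), and your expansion of (iii) via the equality of all principal $(4\lambda+2)\times(4\lambda+2)$ minors for an Eulerian digraph is the standard justification. One small correction there: the sum of the principal $(n-1)$-minors is $(-1)^{n-1}$ times the coefficient of $x^{1}$ in $\det(xI-Q)$, i.e.\ the $(n-1)$st elementary symmetric function of the eigenvalues --- not the ``second-highest'' coefficient --- and since $0$ is a simple eigenvalue this is indeed the product of the nonzero eigenvalues, so your computation of $|\mathcal{K}|$ stands.
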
 
\begin{proof}
Using $Q= (2\la+1)I-M$, we see that (i) implies (ii). Matrix tree theorem shows that (ii) implies (iii).

Using $M+\B=J-I$ and $M \B =(\la +1)I+\la J$, we have $\mrm{det}(xI-M) \mrm{det}(xI-M^{\intercal})= \mrm{det}((x-\la)J+(x^{2}+x+\la+1)I)$. Observing that $\mrm{det}(aI+bJ)=(a+(4\la+3)b)a^{4\la+2}$ and that $\mrm{det}(xI-M)=\mrm{det}(xI-M^{\intercal})$, we arrive at (i). 
\end{proof}
\subsection{Permutation action and characters.}
We will now collect some useful results from character theory. Each of $\Pa$, $\SZ$, $\W$ is constructed using a finite abelian group $(G,+)$. We use the natural action of $G$ on the vertex set to compute the critical groups. These actions are closely related to the regular action of $G$ on itself.

We define the action of $G$ on $Y=\{y_{g}|\ g\in G\}$ by $h.y_{g}=y_{g+h}$. This is the regular action of $G$. Let $\p \nmid |G|$ be a prime and let $\mathfrak{S}$ be an extension of $\bb{Q}_{\p}$ containing the $|G|$-th roots of unity. By $R$ we denote the ring of integers of $\mathfrak{S}$, and by $R^{Y}$ we denote the free $R$-module generated by $Y$ as a basis set. Let $Irr(G)$ be the group of $R$-valued characters of $G$.

It is well known from representation theory that the $RG$-permutation module $R^Y$ decomposes into direct sum of non-isomorphic $RG$-modules of $R$-rank $1$, affording characters $\chi \in Irr(G)$. A basis element for the module affording $\chi$ is $e_{(y,\chi)}=\sum\limits_{g\in G} \chi(-g) y_{g}$. The following Lemma is useful in our computations. 
\begin{lemma}\label{use}
Let $X\subset G$, and let $\delta_{X}:G \to R$ be the characteristic function of $X$ in $G$. Let $\chi(X):= \sum\limits_{z\in X}\chi(z)$. Then we have
\begin{enumerate}
\item $\sum\limits_{g\in G} \chi(-g)\sum\limits_{z\in G}\delta_{X}(z)y_{g+z}=\chi(X)e_{(y,
\chi)}$ and
\item $\sum\limits_{g\in G} \chi(-g)\sum\limits_{z\in G}\delta_{X}(z)y_{z-g}=\ov{\chi(X)}e_{(y,
\chi^{-1})}$.
\end{enumerate}
\end{lemma}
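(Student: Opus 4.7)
The proof is a straightforward computation via change of summation variable, using only that $\chi$ is a group homomorphism into the roots of unity in $R$.

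For part (1), the plan is to swap the order of summation and reindex. Writing $\delta_{X}(z)$ as a restriction to $z \in X$, I get $\sum_{g \in G}\chi(-g)\sum_{z \in X} y_{g+z}$. Substituting $h = g + z$ (so $g = h - z$) and using $\chi(z-h) = \chi(z)\chi(-h)$, the double sum factors as $\bigl(\sum_{z \in X}\chi(z)\bigr)\bigl(\sum_{h \in G}\chi(-h) y_{h}\bigr) = \chi(X)\, e_{(y,\chi)}$. This is the desired identity.

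For part (2), the approach is the same but with a different substitution. Starting from $\sum_{g \in G}\chi(-g)\sum_{z \in X} y_{z-g}$, I substitute $h = z - g$ so that $g = z - h$. Then $\chi(-g) = \chi(h-z) = \chi(h)\chi(-z)$, and the sum rearranges to $\bigl(\sum_{z \in X}\chi(-z)\bigr)\bigl(\sum_{h \in G}\chi(h)\,y_{h}\bigr)$. Since $\chi$ takes values in the roots of unity in $R$, we have $\chi(-z) = \chi(z)^{-1} = \overline{\chi(z)}$, so the first factor equals $\overline{\chi(X)}$. For the second factor, rewriting $\chi(h) = \chi^{-1}(-h)$ identifies it as $e_{(y,\chi^{-1})}$, giving the claimed $\overline{\chi(X)}\, e_{(y,\chi^{-1})}$.

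Both parts are essentially one-line calculations, so there is no substantive obstacle; the only thing to be careful about is the direction of the change of variable and the use of $\chi(-z) = \overline{\chi(z)}$, which requires that $\chi$ take values in roots of unity (guaranteed here because $\p \nmid |G|$ and $R$ contains the $|G|$-th roots of unity). I would present the two computations in a single short paragraph each, with the substitution made explicit.
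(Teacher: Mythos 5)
Your proposal is correct and follows essentially the same route as the paper: both proofs are a change of summation variable combined with the factorization $\chi(-g)=\chi(-z)\chi(z-g)$ and the identity $\overline{\chi(X)}=\chi^{-1}(X)$ coming from $\chi$ taking values in roots of unity. No gaps.
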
    
\begin{proof}
Using $\chi(-g)=\chi(-z)\chi(z-g)=\chi^{-1}(z)\chi^{-1}(g-z)$, we see that  $$\sum\limits_{z\in G}\chi^{-1}(z) \delta_{X}(z) \sum\limits_{g\in G} 
\chi^{-1}(g-z)y_{z-g}= (\chi^{-1}(X))e_{(y,\chi^{-1})}.$$ We may conclude (2) by using $\ov{\chi(X)}=\chi^{-1}(X)$. Proof of (1) follows via similar rearrangements.
\end{proof}
\section{Description of $\C_{1}$.}\label{easy} 
In this section we prove Theorem \ref{drtc}. Let $Q$ be the Laplacian matrix of a DRT with parameters $(4\la+3, 2\la+1, \la)$. Let $\C$ denote its critical group, then from Lemma \ref{linpar}, we have $|\C|=(4\la+3)^{2\la}(\la+1)^{2\la+1}$. As $4\la+3$ and $\la+1$ are coprime,there are subgroups $\C_{1}$, $\C_{2}$ of $\C$ such that $|\C_{1}|=(\la+1)^{2\la+1}$, $|\C_{2}|=(4\la+3)^{2\la}$, and $\C=\C_{1} \oplus \C_{2}$. Theorem \ref{drtc} describes the structure of $\C_{1}$ for all DRTs.

We extend the arguments used in \cite{MW} to determine the structure of $\C_{1}$. Let $\p\mid \la+1$ be a prime integer. Then the Smith normal form of $Q$ over the $\p$-adic numbers $\bb{Z}_{\p}$ gives us the $\p$-part of $\C_{1}$. Let $s_{1}(Q), \ldots, s_{4\la+3}(Q)$ be the invariant factors of $Q$ considered as a matrix over $\bb{Z}_{\p}$  

From \eqref{me} we see that $Q\Q\equiv O \pmod{\p}$. Therefore we have $\mrm{rk}_{\mathfrak{p}}(Q)\leq 4\la+3-\mrm{rk}_{\mathfrak{p}}(\Q)$ and thus $\mrm{rk}_{\mathfrak{p}}(Q) \leq 2\la+1$. Using \eqref{ae}, we see that $Q + \Q \equiv -I-J \pmod{\mathfrak{p}}$. So,
\begin{align*}
4\la+2 & =\mrm{rk}_{\mathfrak{p}}(I+J)\\
&=\mrm{rk}_{\mathfrak{p}}(Q+\Q)\\
& \leq 2\mrm{rk}_{\mathfrak{p}}(Q),
\end{align*}      
and thus $\rk(Q)= 2\la +1$. So $s_{i}(Q)$ is a unit in $\bb{Z}_{\p}$ for $1\leq i \leq 2\la+1$. 

Since the SNF of $\left((4\la +3)(\la +1)I-(\la+1)J\right)$ is $\mrm{diag}(\la+1,\ (4\la +3)(\la +1), \ldots,\ (4\la +3)(\la +1),\ 0)$, equation \eqref{me} and Lemma \ref{snfprod} can be used to conclude that (i) $s_{1}(Q)\mid s_{1}(Q\Q)=\la+1$; (ii) $s_{4\la+3}(Q)\mid s_{4\la+3}(Q\Q)=0$; (iii) and $s_{i}(Q)\mid (4\la+3)(\la+1)$ for $1< i <4\la+3$. We recall that $v_{\p}\left(\prod\limits_{i=1}^{4\la+2}s_{i}(Q) \right)=v_{\p}\left((4\la+3)^{2\la}(\la+1)^{2\la+1}\right)$. Since for $1\leq i \leq 2\la+1$, $s_{i}(Q)$ is a unit in $\bb{Z}_{p}$, we have $v_{\p}\left(\prod\limits_{i=2\la+2}^{4\la+2}s_{i}(Q) \right)=v_{\p}\left((4\la+3)^{2\la}(\la+1)^{2\la+1}\right)$. As $s_{i}(Q) \mid \la+1$, we can now conclude that $s_{i}(Q)=\la+1$ for $2\la+2\leq i \leq 4\la+2$. It now follows that $\C_{1}= \left(\bb{Z}/(\la+1)\bb{Z}\right)^{2\la+1}$.

\section{Crtical group of $\SZ$.}\label{sc}
Let us turn our attention to DRTs of the form $\SZ$. Let $(A,B)$ be an SDF in an abelian group $(G,+)$ of order $2\la+1$. By $\SZ$ we denote the graph with vertex set $V=\{v_{0}\} \cup \{a_{g}| \ g \in G\} \cup \{b_{g}|\ g \in G\}$ and whose adjacency operator $\nu_{M}$ satisfies \eqref{sza}. 

We recall that $\SZ$ is a DRT with parameters $(4\la+3,2\la+1,\la)$. Let $Q$ be the Laplacian matrix of $\SZ$ and $\C$ be its critical group. By Theorem \ref{drtc}, we have $\C=\left(\bb{Z}/ (\la+1)\bb{Z}\right)^{2\la+1} \oplus \C_{2}$, where $\C_{2}$ is the subgroup of order $(4\la+3)^{2\la}$. Let $\p \mid 4\la+3$ be a prime. 
Determining the SNF of $Q$ over an unramified extension of $\bb{Z}_{\p}$ will give us the $\p$-part of $\C_{2}$.

Let $t \in \bb{N}$ such that $|G|\mid (\p^{t}-1)$, by $\theta$ we denote a primitive $(\p^{t}-1)$st root of unity in $\bb{Q}_{\p}$. We denote by $\R$, the ring of integers in $\bb{Q}(\theta)$. As $\p$ is unramified in $\R$, the $\p$-part of $\C$ can be found by determining the SNF of $Q$ over $\R$. By $\R^{V}$, we denote the free module over $\R$ with $V$ as a basis set. The matrix $Q$ defines a map $\nu_{Q}:\R^{V} \to \R^{V}$ with $\nu_{Q}(x)=(2\la+1)x-\nu_{M}(x)$.

We now consider the action of $(G,+)$ on $V$ that satisfies (i)$h.v_{0}=v_{0}$, (ii) $h.a_{g}=a_{g+h}$, and (iii)$h.b_{g}=b_{g+h}$ for all $g,h \in G$. This permutation action preserves adjacency. The action of $G$ on $V$ makes $\R^{V}$ a permutation module for $G$.  Given $\chi \in Irr(G)$, we define $N_{\chi}:=\{x \in \R^{V}| g.x=\chi(g)x\ \text{for all}\ g\in G\}$. In other words, $N_{\chi}$ is the direct sum of all irreducible submodules of $\R^{V}$ affording $\chi$. As $G$ preserves adjacency, $\nu_{Q}$ is an $\R G$ map. Now by applying Schur's Lemma, we have $\nu(N_{\chi})\subset N_{\chi}$. 
  
The action of $G$ decomposes $\R^{V}$ into $\R^{\{v_{0}\}}\oplus \R^{V_{a}} \oplus \R^{V_{b}}$, where $V_{\mu}=\{\mu_{g}|g \in G\}$ for $\mu=a,b$. Now $\R^{V_{\mu}}$ is a regular module for $G$, and $\R^{\{v_{0}\}}$ is a trivial module. Now $\R^{V_{\mu}}= \bigoplus\limits_{\chi \in Irr(G)} M_{(\chi,\mu)}$, where $M_{(\chi,\mu)}$ is the submodule affording $\chi$. A basis element for $M_{(\chi,\mu)}$ is $e_{(\mu,\chi)}=\sum\limits_{g\in G} \chi(-g)\mu_{g}$.

Let $\chi_{0}$ denote the trivial character of $G$. For $\chi \neq \chi_{0}$, we have $N_{\chi}=M_{(\chi,a)}+M_{(\chi,b)}$; and $N_{\chi_{0}}= Rv+M_{(\chi_{0},a)}+M_{(\chi_{0},b)}$. We now have $R^{V}= \bigoplus\limits_{\chi \in Irr(G)} N_{\chi}$, with $\nu_{Q}(N_{\chi}) \subset N_{\chi}$. We will now look at $\nu_{Q}|_{N_{\chi}}$.

Using Lemma \ref{use} and the relations in \eqref{sza} yields the following lemma. 
\begin{lemma}\label{imsz}
Let $\chi\in Irr(G)\setminus \{\chi_{0}\}$, then
\begin{enumerate}
\item $\nu_{Q}(e_{(a,\chi)})=(2\la+1-\chi(A)) e_{(a,\chi)} + (-1-\chi(B)) e_{(b,\chi)}$ and
\item $\nu_{Q}(e_{(b,\chi)})=(-\chi(B)) e_{(a,\chi)} + (2\la+1-\chi(-A)) e_{(b,\chi)}$.
\end{enumerate}
\end{lemma}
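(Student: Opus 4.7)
The plan is a direct term-by-term computation: substitute the formulas for $\nu_M(a_g)$ and $\nu_M(b_g)$ from \eqref{sza} into $\nu_Q = (2\la+1)I - \nu_M$, and push $\nu_Q$ through the defining sum $e_{(\mu,\chi)} = \sum_{g\in G} \chi(-g)\mu_g$ by $R$-linearity, after which every resulting double sum is of the form handled by Lemma \ref{use}(1).

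For item (1), I would start from $\nu_M(e_{(a,\chi)}) = \sum_{g\in G} \chi(-g)\,\nu_M(a_g)$ and interchange the order of summation. Each of the two summands of $\nu_M(a_g)$ in \eqref{sza} has the shape $\sum_{z\in G}\delta_X(z)\,\mu_{g+z}$, so the corresponding double sum collapses via Lemma \ref{use}(1) to $\chi(X)\,e_{(\mu,\chi)}$. Choosing $X=A,\mu=a$ for the first piece and the appropriate set/coordinate for the $b$-piece produces a linear combination of $e_{(a,\chi)}$ and $e_{(b,\chi)}$; subtracting this from $(2\la+1)e_{(a,\chi)}$ yields item (1). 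Item (2) proceeds in the same manner, with one extra ingredient: the leading $v_0$ term in $\nu_M(b_g)$ contributes $\bigl(\sum_{g\in G}\chi(-g)\bigr)v_0$, which vanishes because $\chi$ is a non-trivial character of a finite abelian group. Once this $v_0$ contribution is removed, the remaining two summands are collapsed by Lemma \ref{use}(1) exactly as in item (1), producing the stated coefficients.

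There is no genuine obstacle here — the lemma is a bookkeeping identity that follows mechanically from the two cited inputs. The only conceptual point worth flagging is the vanishing of the character sum $\sum_{g\in G}\chi(-g)$, which is precisely why the hypothesis $\chi\neq\chi_0$ appears in the statement; the $\chi_0$-isotypic component $N_{\chi_0}$ carries the extra basis vector $v_0$ and must therefore be treated by a separate, lower-dimensional calculation in the subsequent assembly of the Smith normal form of $Q$.
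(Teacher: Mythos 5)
Your plan is exactly the paper's proof: the paper gives no more than the one-line recipe ``apply Lemma \ref{use} to the relations \eqref{sza}'', and your term-by-term expansion, including the observation that $\sum_{g}\chi(-g)=0$ kills the $v_{0}$ term for $\chi\neq\chi_{0}$, is the correct way to carry it out. One caveat, though: your claim that the computation for item (2) ``produces the stated coefficients'' does not literally check out against \eqref{sza} as printed. The last summand of $\nu_{M}(b_{g})$ there reads $\sum_{z}\delta_{B\cup\{0_{G}\}}(z)a_{g+z}$, which via Lemma \ref{use}(1) contributes $(1+\chi(B))e_{(a,\chi)}$ and would give the coefficient $-1-\chi(B)$ rather than $-\chi(B)$ in item (2); it also makes the out-degree of $b_{g}$ equal to $2\la+2$ and puts arcs in both directions between $a_{g}$ and $b_{g}$, contradicting the tournament property. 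The term must be $\sum_{z}\delta_{B}(z)a_{g+z}$, after which your computation yields the lemma as stated. So the only thing missing from your write-up is noticing (and repairing) this inconsistency in \eqref{sza} before asserting that the mechanical calculation matches the statement.
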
 

For $\chi\neq \chi_{0}$, let $Q_{\chi}$ be the matrix representation of $\nu_{Q}|_{N_{\chi}}$ with respect to the ordered basis $(e_{(a,\chi)}, e_{(b,\chi)})$. Let $Q_{\chi_{0}}$ be the matrix representation of $\nu_{Q}|_{N_{\chi_{0}}}$ with respect to the ordered basis $(e_{(a,\chi_{0})}, e_{(b,\chi_{0})},v)$. So $Q$ is similar to the block diagonal matrix $\bigoplus\limits_{\chi\in Irr(G)}Q_{\chi}$.

 We see from Lemma \ref{imsz} that for $\chi \neq \chi_{0}$, we have $Tr(Q_{\chi})=4\la+2- \chi(A)-\chi(-A)$. Now as $\chi$ is not trivial, we have $0=\chi(G)$. Using $A \cup -A = G\setminus \{0_{G}\}$, we may conclude that $Tr(Q_{\chi})=4\la+3$. The eigenvalues of $Q_{\chi}$ are elements of the set $\left\lbrace 0, \dfrac{4\la+3-\left(\sqrt{4\la+3}\right)i}{2}, \dfrac{4\la+3+\left(\sqrt{4\la+3}\right)i}{2}\right\rbrace$ of eigenvalues of $Q$. As $Tr(Q_{\chi})=4\la+3$, the eigenvalues of $Q_{\chi}$ are $\dfrac{4\la+3-\left(\sqrt{4\la+3}\right)i}{2}$ and $\dfrac{4\la+3+\left(\sqrt{4\la+3}\right)i}{2}$ and $det(Q_{\chi})=(4\la+3)(\la+1)$.

We also observe from Lemma \ref{imsz} that the difference of the off-diagonal entries of $Q_{\chi}$ is $1$ and thus one of them is coprime to $\p$. Applying Lemma \ref{minor} and $det(Q_{\chi})=(4\la+3)(\la+1)$ we can conclude that $\mrm{diag}(1,4\la+3)$ is the SNF of $Q_{\chi}$ over $\R$. Similar computations can be used to show that $\mrm{diag}(1,1,0)$ is the SNF of $Q_{\chi_{0}}$ over $\R$. This proves Theorem \ref{szc}.

\section{Crtical group of $\W$.}\label{wc}
Given an SDF $(A,B,C,D)$ in an additive abelian group $(G,+)$ of order $2\la+1$, we recall that $\W$ is a graph with vertex set $V=\{v_{1},v_{2},v_{3}\} \bigcup\limits_{\mu=a,b,c,d} \{\mu_{g}| \ g \in G\}$, and whose adjacency operator $\nu_{M}$ is defined by \eqref{wwa}.

We recall that $\W$ is a DRT with parameters $(8\la+7,4\la+3,2\la+2)$. Let $Q$ be the Laplacian matrix of $\W$ and $\C$ be its critical group. By Theorem \ref{drtc}, we have $\C=\left(\bb{Z}/ (2\la+2)\bb{Z}\right)^{4\la+3} \oplus \C_{2}$, where $\C_{2}$ is the subgroup of order $(8\la+7)^{4\la+2}$. Let $\p \mid 8\la+7$ be a prime.
Determining the SNF of $Q$ over an unramified extension of $\bb{Z}_{\p}$ will give us the $\p$-part of $\C_{2}$.

Let $t \in \bb{N}$ such that $|G|\mid (\p^{t}-1)$, by $\theta$ we denote a primitive $(\p^{t}-1)$st root of unity in $\bb{Q}_{\p}$. We denote by $\R$, the ring of integers in $\bb{Q}(\theta)$. As $\p$ is unramified in $\R$, the $\p$-part of $\C$ can be found by determining the SNF of $Q$ over $\R$. By $\R^{V}$, we denote the free module over $\R$ with $V$ as a basis set. The matrix $Q$ defines a map $\nu_{Q}:\R^{V} \to \R^{V}$ with $\nu_{Q}(x)=(4\la+3)x-\nu_{M}(x)$. 

Unlike in the case of $\SZ$, the natural $G$ action on $\W$ does not preserve adjacency, but provides a useful integral basis for $\R^{V}$. We consider the action of $G$ on $V$ that satisfies (i)$h.v_{i}=v_{i}$, (ii) $h.\mu_{g}=\mu_{g+h}$ for all $(i,g,h, \mu) \in \{1,2,3\}\times G \times G \times\{a,b,c,d\}$.

The action of $G$ decomposes $\R^{V}$ into $\bigoplus\limits_{i=1}^{3}\R^{\{v_{i}\}}\oplus \bigoplus\limits_{\mu=a,b,c,d} \R^{V_{\mu}}$, where $V_{\mu}=\{\mu_{g}|g \in G\}$. Now $\R^{V_{\mu}}$ is a regular module of $G$, and $\R^{\{v_{i}\}}$'s are trivial modules. Now $\R^{V_{\mu}}= \bigoplus\limits_{\chi \in Irr(G)} M_{(\chi,\mu)}$, where $M_{(\chi,\mu)}$ is the submodule affording $\chi$. A basis element for $M_{(\chi,\mu)}$ is $e_{(\mu,\chi)}=\sum\limits_{g\in G} \chi(-g)\mu_{g}$. The following Lemma describes the images of $e_{(\mu,\chi)}$ under the action of $\nu_{Q}$. 

\begin{lemma}\label{imww}
For non-trivial $\chi \in Irr(G)$, we have
\begin{enumerate}
\item $\nu_{Q}(e_{(a,\chi)})=(4\la+3-\chi(A))e_{(a,\chi)}+(\ov{\chi(B)})e_{(b,\chi)}+ (\ov{\chi(C)})e_{(c,\chi^{-1})}+ (\ov{\chi(D)})e_{(d,\chi)}$
\item  $\nu_{Q}(e_{(b,\chi)})=-\chi(B)e_{(a,\chi)}+ (4\la+3-\ov{\chi(A)})e_{(b,\chi)}- (\ov{\chi(D)})e_{(c,\chi)}+ (\ov{\chi(C)})e_{(d,\chi^{-1})}$
\item $\nu_{Q}(e_{(c,\chi)})=\ov{-\chi(C)}e_{(a,\chi^{-1})}+ (\chi(D))e_{(b,\chi)}+ (4\la+3-\chi(A))e_{(c,\chi)}-(\chi(B))e_{(d,\chi)}$
\item $\nu_{Q}(e_{(d,\chi)})=-\chi(D)e_{(a,\chi)}- (\ov{\chi(C)})e_{(b,\chi^{-1})}+ (\ov{\chi(B)})e_{(c,\chi)}+ (4\la+3-\ov{\chi(A)})e_{(d,\chi)}$.
\end{enumerate}
\end{lemma}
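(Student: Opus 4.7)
The plan is a direct computation for each of $\mu \in \{a,b,c,d\}$. Write $\nu_Q(e_{(\mu,\chi)}) = (4\lambda+3)e_{(\mu,\chi)} - \nu_M(e_{(\mu,\chi)})$, and use linearity together with the definition $e_{(\mu,\chi)}=\sum_g \chi(-g)\mu_g$ to reduce the problem to computing $\sum_g \chi(-g)\nu_M(\mu_g)$. Substituting from \eqref{wwa} breaks this into at most four ``convolution'' sums of shape $\sum_g \chi(-g)\sum_z \delta_X(z)\eta_{g+z}$ or $\sum_g \chi(-g)\sum_z \delta_X(z)\eta_{z-g}$ with $\eta \in \{a,b,c,d\}$ and $X$ one of $A$, $B$, $C$, $D$, $\pm A\cup\{0_G\}$, etc., plus (in the cases $\mu=b,c,d$) a constant term of the form $v_i+v_j$.

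The next step is to collapse each piece. The $v_i$ parts disappear because for non-trivial $\chi$ we have $\sum_g \chi(-g)=0$, so $v_i\sum_g \chi(-g)=0$. For the remaining sums, apply Lemma \ref{use}: a sum indexed by $g+z$ contributes $\chi(X)\,e_{(\eta,\chi)}$, whereas a sum indexed by $z-g$ contributes $\overline{\chi(X)}\,e_{(\eta,\chi^{-1})}$. This expresses $\nu_M(e_{(\mu,\chi)})$ as an explicit $\R$-linear combination of $e_{(\eta,\chi)}$ and $e_{(\eta,\chi^{-1})}$, with coefficients involving $\chi$ evaluated on $A$, $B$, $C$, $D$ and their $\{0_G\}$-augmented negatives.

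The final step is cleaning up coefficients using the SDF hypothesis. For each block $X\in\{A,B,C,D\}$, the skew conditions $X\cap(-X)=\emptyset$ and $X\cup(-X)=G\setminus\{0_G\}$ together with non-triviality of $\chi$ give
\[
\chi(X)+\chi(-X)=\chi(G\setminus\{0_G\})=-1,
\]
and since $\chi(-X)=\overline{\chi(X)}$ we deduce the identity $\chi(X)+\overline{\chi(X)}=-1$. Combined with $\chi(X\cup\{0_G\})=\chi(X)+1$ and $\chi(-X\cup\{0_G\})=\overline{\chi(X)}+1$, each raw coefficient of the form $\pm(\chi(X)+1)$ or $\pm(\overline{\chi(X)}+1)$ can be rewritten as $\mp\overline{\chi(X)}$ or $\mp\chi(X)$, matching the compact coefficients displayed in (1)--(4). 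Subtracting from $(4\lambda+3)e_{(\mu,\chi)}$ completes each identity.

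The computation itself is elementary; the main obstacle is purely bookkeeping. For each of the sixteen summands appearing across the four formulas one must keep careful track of: whether the running subscript is $g+z$ or $z-g$ (which decides $\chi$ versus $\chi^{-1}$ and whether complex conjugation appears); which block $X$ and which augmentation by $\{0_G\}$ is involved (which decides whether the ``$+1$'' correction is present); and the sign with which the summand enters \eqref{wwa}. Organizing this as a $4\times 4$ coefficient table keyed by $(\mu,\eta)$ before invoking the skew-sum identity above is the cleanest way to avoid sign errors.
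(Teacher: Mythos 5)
Your proposal is correct and follows exactly the route the paper intends: the paper's entire proof of this lemma is the remark that it ``follows by straightforward applications of the relations in \eqref{wwa} and Lemma \ref{use},'' and your computation supplies precisely the missing bookkeeping --- killing the $v_i$ terms via $\sum_g\chi(-g)=0$, converting $g+z$ versus $z-g$ sums into $\chi(X)e_{(\eta,\chi)}$ versus $\overline{\chi(X)}e_{(\eta,\chi^{-1})}$, and simplifying via $\chi(X)+\overline{\chi(X)}=-1$ from the skew conditions on each block. No gaps.
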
    

The result above follows by straightforward applications of the relations in \eqref{wwa} and Lemma \ref{use}. 

Let $\chi_{0}$ denote the trivial character of $G$.
For $\chi\neq \chi_{0} \in Irr(G)$, define $N_{\chi}$ to be the $\R^{V}$ submodule generated by $\{e_{(\mu,f)}|\ \mu=a,b,c,d\ \text{and}\ f=\chi,\chi^{-1}\}$. Let $N_{\chi_{0}}$ be the submodule generated by $\{v_{1},v_{2},v_{3},e_{(\mu,\chi_{0})}|\ \mu=a,b,c,d\}$. Lemma \ref{imww} shows that $\nu_{Q}(N_{\chi}) \subset N_{\chi}$ for $\chi \neq \chi_{0}$. Lemma \ref{imww} implies $\nu_{Q}(N_{\chi_{0}}) \subset N_{\chi_{0}}$. By $\nu_{\chi}$ we denote $\nu_{Q}|_{N_{\chi}}$.

For $\chi\neq \chi_{0}$, let $Q_{\chi}$ be the matrix representation of $\nu_{\chi}$ with respect to the ordered basis $(e_{(\mu,\chi)}|\ {\mu=a,b,c,d}) \cup (e_{(\mu,\chi^{-1})}|\ {\mu=a,b,c,d})$ (see \eqref{mat}). By Lemma \ref{imww} we have $Tr(\nu_{\chi})=16\la+12-2(\chi(A)+\chi(-A))=2(8\la+7)$. The eigenvalues of $Q_{\chi}$ are elements of the set $\left\lbrace 0, \dfrac{8\la+7-\left(\sqrt{8\la+7}\right)i}{2}, \dfrac{8\la+7+\left(\sqrt{8\la+7}\right)i}{2}\right\rbrace$ of eigenvalues of $Q$. As $Tr(Q_{\chi})=8\la+7$, the eigenvalues of $Q_{\chi}$ are $\dfrac{8\la+7-\left(\sqrt{8\la+7}\right)i}{2}$ and $\dfrac{8\la+7+\left(\sqrt{8\la+7}\right)i}{2}$ and that $det(Q_{\chi})=(8\la+7)^{4}(2\la+2)^{4}$. 

\begin{equation}\label{mat}
Q_{\chi}=  \left(\begin{smallmatrix}
4\la+3- \chi(A) & -\chi(B) & 0 & -\chi(D) & 0 & 0 & -\chi(C) & 0\\
\ov{\chi(B)} & 4\la+3- \ov{\chi(A)} & \chi(D) & 0 &0 & 0 &0 & -\chi(C)\\
0 & -\ov{\chi(D)} & 4\la+3- \chi(A) & \ov{\chi(B)} & \ov{\chi(C)} & 0 & 0 & 0\\
-\chi(D) & 0 & -\chi(B) & 4\la+3- \ov{\chi(A)} & 0 & \chi(C) & 0 & 0\\
0 & 0 & -\ov{\chi(C)} & 0 & 4\la+3- \ov{\chi(A)} & -\ov{\chi(B)} & 0 & -\ov{\chi(D)}\\
0 & 0 &0 & -\ov{\chi(C)} & \chi(B) & 4\la+3- \chi(A) & \ov{\chi(D)} & 0 \\
\chi(C) & 0 & 0 & 0 & 0 & -{\chi(D)} & 4\la+3- \ov{\chi(A)} & \chi(B)\\
0 & \ov{\chi(C)} & 0 & 0 & -\ov{\chi(D)} & 0 & -\ov{\chi(B)} & 4\la+3- \chi(A)
\end{smallmatrix}  \right). 
\end{equation}

Straight forward computations show that $Q_{\chi}\ov{Q_{\chi}^{\intercal}}=sI$, where $s=|\chi(A)|^{2}+|\chi(B)|^{2}+|\chi(C)|^{2}+|\chi(D)|^{2}$. As $det(Q_{\chi})=(8\la+7)^{4}(2\la+2)^{4}$, we have $s=(8\la+7)(2\la+2)$. 

Let $m_{1}$ be the minor of $Q_{\chi}$  associated to row indices $\{1,3,5,7\}$ and columns indices $\{1,3,5,7\}$. Let $m_{2}$ be the minor of $Q_{\chi}$ associated to row indices $\{1,3,5,7\}$ and columns indices $\{2,4,7,8\}$. Computations yield  $m_{1}=((4\la+3)^{2}+(4\la+3)+|\chi(A)|^{2}+|\chi(C)|^{2})^{2}$ and $m_{2}=(|\chi(B)|^{2}+|\chi(D)|^{2})^{2}$. We see that $\sqrt{m_{2}}+\sqrt{m_{1}}= (4\la+3)^{2}+(4\la+3)+(8\la+7)(2\la+2)=(4\la+3)(4\la+4)+(8\la+7)(2\la+2)$. As both $4\la+4$ and $4\la+3$ are coprime to $8\la+7$, we see that $\p$ does not divide $m_{1}$ and $m_{2}$ simultaneously. So there is at least one $4$-minor of $Q_{\chi}$ that is not divisible by $\p$. Applying Lemma \ref{minor} and $det(Q_{\chi})=(8\la+7)^{4}(2\la+2)^{4}$ we can conclude that the SNF of $Q_{\chi}$ over $\R$ is of the form $\mrm{diag}(1,1,1,1,e_{5},e_{6},e_{7},e_{8})$, where $e_{5}\ \mid e_{6}\ \mid e_{7}\ \mid e_{8}$ and $v_{\p}(e_{5}e_{6}e_{7}e_{8})=4v_{\p}(8\la+7)$. As $Q_{\chi}\ov{Q_{\chi}^{\intercal}}=(8\la+7)(2\la+2)I$, we can conclude that $v_{\p}(e_{i})=v_{\p}(8\la+7)$ for $i=5,6,7,8$. This concludes the proof of Theorem \ref{wwc}.    
\section{Critical group of $\Pa$.}\label{pc}
We now turn our attention to Paley tournament graph $\Pa$. The computation of critical group of $\Pa$ is essentially the same as that of the Paley graph done in \cite{CSX}. The proofs of results in this section are similar to those in \cite{CSX}.

Let $q=p^{t}$ be a power of a prime $p$ with $q \equiv 3 \pmod{4}$. Let $K$ be the group field with $q$ elements and let $H$ be the subgroup of squared in $\K$. We recall that the Paley tournament graph $\mathcal{P}(q)$ is the Cayley graph of $(K, +)$ with ``connection" set being $H$.  

$\Pa$ is a DRT with parameters $\left(q,k:=\dfrac{q-1}{2}, \la:=\dfrac{q-3}{4}\right)$. Let $Q$ be the Laplacian matrix of $\Pa$ and $\C$ be its critical group . By Theorem \ref{drtc}, we have $\C=\left(\bb{Z}/ (\mu)\bb{Z}\right)^{2\mu} \oplus \C_{2}$, where $\mu=\dfrac{q-1}{4}$ and $\C_{2}$ is the subgroup of order $q^{\frac{q-3}{2}}$. So we now need to determine the Sylow $p$-subgroup of $\C$. We do this by determining the SNF of $Q$ over an unramified extension of $\bb{Z}_{p}$.

Let $R$ be the ring of integers of the unique unramified extension of degree $t$ over $\bb{Q}_{p}$. Then the ideal $pR$ is a maximal ideal, and thus $K=R/pR \cong \bb{F}_{q}$. By $R^{K}$, we denote the free module over $R$ generated by $\{[x]|\ x\in K\}$. The matrix $Q$ defines a map $\nu_{Q}:R^{K} \to R^{K}$ that satisfies $\nu_{Q}([x])=k[x]- \sum\limits_{z\in S}[x+z]$. In other words $Q$ is the matrix representation of $\nu_{Q}$ with respect to some ordering of the basis set $\{[x]|\ x\in K\}$.  

Now $H$ acts a group of automorphisms on $\Pa$. So $\nu_{Q}$ is in fact an $H$-endomorphism of $R^{K}$.By $Irr(H)$ we denote the irreducible $R$-valued characters of $H$.  Given $\chi \in Irr(H)$, we define $N_{\chi}:=\{x \in \R^{K}| g.x=\chi(g)x\ \text{for all}\ g\in H\}$. In other words, $N_{\chi}$ is the direct sum of all irreducible submodules of $\R^{V}$ that affording $\chi$. As $H$ preserves adjacency, $\nu_{Q}$ is an $\R H$ map. Now by applying Schur's Lemma, we have $\nu_{Q}(N_{\chi})\subset N_{\chi}$.

The action of $H$ on $R^{K}$ is the restriction of the natural action of $K^{\times}$ on $R^{K}$. Let $T:\K \to R^{\times}$ be the Teichm\"{u}ller character generating the cyclic group $\mrm{Hom}(\K,\ R^{\times})$. Then $\K$ action on $R^{K}$ decomposes it into the direct sum $R[0]\oplus R^{\K}$. Now the regular module $R^{\K}$ decompose further into a direct sum of $\K$-invariant submodules of rank $1$, affording the characters $T^{i}$, $i=0,\ldots,q-2$. The component affording $T^{i}$ is spanned by $f_{i}:= \sum\limits_{x \in \K} T^{i}(x^{-1})[x]$.     
 Therefore $\{\mathbf{1}, f_{1} \ldots f_{q-2}, [0]\}$ is a basis for $R^K$, where $\mathbf{1}:=f_{0}+[0]=\sum\limits_{x \in K}[x]$. The characters $T^{i}$ and $T^{-i}$ are the same when restricted to $H$. So we have $Irr(H)=\{T^{i}| 0\leq i \leq \ k=\dfrac{q-1}{2}\}$; for $1\leq i<k$, we have $N_{i}:=N_{T^{i}}=Rf_{i}+ R f_{i+k}$; and $N_{0}:=N_{T^{0}}=R[0]+Rf_{k}+R\mathbf{1}$. We now have $R^{K}= \bigoplus\limits_{i=0}^{k-1}N_{i}$ with $\nu_{Q}(N_{i}) \subset N_{i}$ for all $0 \leq i \leq k-1$.
 
Following conventions in \cite{Ax}, we extend the $T^{i}$'s to $K$. As per this convention, the character $T^{0}$ maps every element of $K$ to $1$, while $T^{q-1}$ maps $0$ to $0$. All other characters map $0$ to $0$. For two integers $a,b$ the Jacobi sum $J(T^{a},T^{b})$ is $\sum\limits_{x \in K}T^{a}(x)T^{b}(1-x)$. We refer the reader to Chapter $2$ of \cite{BKR} for formal properties of Jacobi sums. Following the conventions established, for $a \nequiv 0 \pmod{q-1}$, we have $J(T^{a},T^{0})=0$ and $J(T^{a},T^{q-1})=-1$.
 
The following Lemma describes action of $\nu_{Q}$ on $N_{i}$. This result is essentially \cite[Lemma 3.1]{CSX}.
\begin{lemma}\label{im} 
\begin{enumerate}
\item If $1\leq i \leq k-1$, we have 
$\nu_{Q}(f_{i})= \dfrac{1}{2}\left(qf_{i}-J(T^{-i},T^{k})f_{i+k}\right)$.
\item 
$\nu_{Q}(f_{k})=\dfrac{1}{2}\left(-\mathbf{1} +qf_{k} + q[0]\right)$.
\item $\nu_{Q}([0])=\dfrac{1}{2}\left(q[0]- f_{k} -\mathbf{1} \right)$.
\item $\nu_{Q}(\mathbf{1})=0$.
\end{enumerate}
\end{lemma}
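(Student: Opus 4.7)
The plan is to derive all four identities from a single master formula for $\nu_Q([x])$. Since for $z \in \K$ the indicator of $H$ equals $(1+\psi(z))/2$, where $\psi := T^k$ is the quadratic character of $K$, and since $\sum_{z\in\K}[x+z] = \mathbf{1} - [x]$, one obtains
\[
\nu_Q([x]) \;=\; \frac{q}{2}[x] \;-\; \frac{1}{2}\mathbf{1} \;-\; \frac{1}{2}\sum_{z\in\K}\psi(z)[x+z].
\]
Part (4) follows at once, since $\sum_{x\in K}\nu_Q([x]) = (k-|H|)\mathbf{1} = 0$. Part (3) is also immediate: because $T^{-k} = T^k$ (using $2k = q-1$), one has $\sum_{z\in\K}\psi(z)[z] = f_k$, so applying the master formula at $x = 0$ gives the claimed expression.

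For part (1), I would apply $\nu_Q$ to $f_i = \sum_{x\in\K}T^{-i}(x)[x]$ by linearity. The $\mathbf{1}$-contribution vanishes because $T^{-i}$ is nontrivial for $1 \le i \le k-1$. To handle the convolution term I would collect the coefficient of each $[y]$, substitute $x = yu$ in the inner sum, factor out $T^{-i}(y)\psi(y) = T^{k-i}(y)$, and recognize the remaining sum as the Jacobi sum $J(T^{-i},\psi) = J(T^{-i}, T^k)$. The coefficient at $y = 0$ vanishes because $T^{k-i}$ is nontrivial. The resulting expression $\sum_{y\in\K}T^{k-i}(y)[y]$ is, by definition, $f_{k-i}$; and $k - i \equiv i + k \pmod{q-1}$, so this is $f_{i+k}$.

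Part (2) is the only place where the hypothesis $q \equiv 3 \pmod{4}$ is essential beyond mod-$(q-1)$ reductions. Applying the master formula to $f_k = \sum_{x\in\K}\psi(x)[x]$, the same substitution trick gives the coefficient of $[y]$ for $y \ne 0$ as $\psi(y^2)J(\psi,\psi) = J(\psi,\psi)$. Using $\psi = \psi^{-1}$ together with the standard identity $J(\chi,\chi^{-1}) = -\chi(-1)$ for nontrivial $\chi$, one gets $J(\psi,\psi) = -\psi(-1) = 1$, since $q \equiv 3 \pmod{4}$ forces $\psi(-1) = -1$. Meanwhile the coefficient at $y = 0$ works out to $\psi(-1)\sum_{x\in\K}\psi(x)^2 = -(q-1)$. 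Assembling these pieces, the convolution term becomes $\mathbf{1} - q[0]$, which on plugging back into the master formula yields (2).

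The main technical point to watch is the bookkeeping of the convention $T^a(0) = 0$ for $a \not\equiv 0 \pmod{q-1}$; this is what allows me to freely extend the punctured sums over $\K \setminus \{1\}$ to full Jacobi sums over $K$ without spurious boundary terms. Once those conventions are cleanly maintained, each part reduces to a single Jacobi sum evaluation, and I do not expect any serious obstacle beyond that.
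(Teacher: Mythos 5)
Your proposal is correct and follows essentially the same route as the paper's proof: both rewrite the indicator of $H$ in terms of the quadratic character $\psi=T^{k}$ (your $(1+\psi)/2$ on $K^{\times}$ is the paper's $(T^{0}+\psi-\delta_{\{0\}})/2$ on $K$), and both reduce each part to the Jacobi sum evaluations $J(T^{-i},\psi)$ and $J(\psi,\psi)=-\psi(-1)=1$. One bookkeeping remark on part (1): the identification $\sum_{y}T^{k-i}(y)[y]=f_{k-i}$ and the congruence $k-i\equiv i+k \pmod{q-1}$ are each off by a sign, but the two slips cancel --- with the convention $f_{j}=\sum_{y}T^{-j}(y)[y]$ the sum is $f_{i-k}$, and $i-k\equiv i+k\pmod{q-1}$ because $2k=q-1$, so your stated conclusion $f_{i+k}$ is the right one.
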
 
\begin{proof}
We observe that the characteristic function $\delta_{H}$ of $H$ in $K$ is $\dfrac{T^{0}+\psi-\delta_{\{0\}}}{2}$, where $\psi=T^{k}$ is the quadratic character. We now recall that $\nu_{Q}([x])=kx - \sum\limits_{y\in Y} \delta_{H}(y)[x+y]$.

We have
\begin{align*}
2\nu_{Q}(f_{i})&= 2 \sum\limits_{x\in \ K^{\times}}T^{-i}(x)\nu_{Q}([x])\\
&= (q-1)f_{i}- 2 \sum\limits_{x\in \ K^{\times}}T^{-i}(x)\sum\limits_{y \in K} \delta_{H}(y)([x+y]) \\
&= (q-1)f_{i} +  \sum\limits_{x\in \ K^{\times}}T^{-i}(x)\sum\limits_{y \in K} \left(\delta_{0}(y)- T^{0}(y)-\psi(y)\right)[x+y]\\
&= qf_{i}- \sum\limits_{x\in \ K^{\times}}T^{-i}(x)\sum\limits_{y \in K} T^{0}(y)[x+y]- \sum\limits_{x\in \ K^{\times}}T^{-i}(x)\sum\limits_{y \in K} T^{k}(y)[x+y]
\end{align*}

(1) We assume $1\leq i \leq k-1$. In this case,
the middle sum in the above expression $\sum\limits_{x\in \ K^{\times}}T^{-i}(x)\sum\limits_{y \in K} T^{0}(y)[x+y]=\left(\sum\limits_{x\in \ K^{\times}}T^{-i}(x)\right)\times \left(\sum\limits_{z \in K} [z] \right)$. For $i \neq 0$, as $T^{i}$ is a non-trivial character of $K^{\times}$ and thus $\sum\limits_{x\in \ K^{\times}}T^{-i}(x)=0$.

The last sum in the expression above is $$\sum\limits_{x\in \ K^{\times}}T^{-i}(x)\sum\limits_{y \in K} \psi(y)[x+y]=\sum\limits_{x\in \ K^{\times}}T^{-i}(x)\sum\limits_{y \in K} \psi(y)[x+y]= \sum\limits_{z \in K^{\times}}\sum\limits_{x\in \ K^{\times}}T^{-i}(x) \psi(z-x)[z]+\sum\limits_{x\in \ K^{\times}}T^{-i}(x) \psi(-x)[0].$$  For $z\neq 0$, using $T^{-i}(x)\psi(z-x)=T^{-i}(z)\psi(z)T^{-i}(x/z)\psi(1-(x/z))$, we have 
\begin{align*}
\sum\limits_{z \in K^{\times}}\sum\limits_{x\in \ K^{\times}}T^{-i}(x) \psi(z-x)[z] &= \sum\limits_{x,z \in K^{\times}} T^{-i}(x/z)\psi(1-(x/z)) T^{-i}(z)\psi(z)[z] \\
&= \sum\limits_{w,z \in K^{\times}} T^{-i}(w)\psi(w) T^{-i-k}(z)[z]\\
&= J(T^{-i},\psi)f_{i+k}. 
\end{align*}    
We have $\sum\limits_{x\in \ K^{\times}}T^{-i}(x) \psi(-x)[0]= \left(\sum\limits_{x\in \ K^{\times}}T^{-i+k}(x)\right) \psi(-1)[0]$. As $i \neq k$, $T^{-i+k}$ is non trivial and thus $\sum\limits_{x\in \ K^{\times}}T^{-i+k}(x)=0$. We have now proved (i).

The proof of $(2)$ follows by essentially the same computation as above and using the fact that $J(\psi,\psi)=-\psi(-1)=1$. Results (3) and (4) are straightforward.
\end{proof}

\begin{corollary}
The Laplacian $Q$ is similar over $R$ to a diagonal matrix with diagonal entries $J(T^{i},T^{k})$ for $1\leq i \leq q-2$ and $i\neq k$, two ones and one zero. 
\end{corollary}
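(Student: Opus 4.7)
The plan is to use the direct-sum decomposition $R^{K} = \bigoplus_{i=0}^{k-1} N_{i}$ together with the invariance $\nu_{Q}(N_{i}) \subset N_{i}$ established just before Lemma \ref{im}. Once the Smith normal form over $R$ of each restriction $\nu_{Q}|_{N_{i}}$ is computed, the SNF of $Q$ is the direct sum of those block SNFs. The target is to show that each $2$-dimensional block $N_{i}$ (for $1 \leq i \leq k-1$) contributes invariant factors $J(T^{-i}, T^{k})$ and $J(T^{-i-k}, T^{k})$ up to units in $R$, while the $3$-dimensional block $N_{0}$ contributes $1,\, 1,\, 0$.

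First I would extend Lemma \ref{im}(1) to the basis vector $f_{i+k}$ for $1 \leq i \leq k-1$: the derivation carries through verbatim for any $j \in \{1,\ldots,q-2\}\setminus\{k\}$, and setting $j = i+k$ and using the periodicity $f_{i+2k} = f_{i}$ yields $\nu_{Q}(f_{i+k}) = \tfrac{1}{2}(q f_{i+k} - J(T^{-i-k}, T^{k}) f_{i})$. Thus in the ordered basis $(f_{i}, f_{i+k})$ the matrix of $\nu_{Q}|_{N_{i}}$ is
$$Q_{i} = \frac{1}{2}\begin{pmatrix} q & -J(T^{-i-k}, T^{k}) \\ -J(T^{-i}, T^{k}) & q \end{pmatrix}.$$
To compute $\det(Q_{i})$ I would expand each Jacobi sum as a ratio of Gauss sums, $J(T^{a}, T^{b}) = g(T^{a})g(T^{b})/g(T^{a+b})$; using $T^{-i-k} = T^{k-i}$ (a consequence of $\psi^{2}=1$), the product $J(T^{-i}, T^{k})\, J(T^{-i-k}, T^{k})$ telescopes to $g(\psi)^{2} = \psi(-1)q = -q$, where the last equality uses the standing hypothesis $q \equiv 3 \pmod 4$. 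Consequently $\det Q_{i} = (q^{2} - (-q))/4 = q(q+1)/4$. Now Lemma \ref{minor} applies: the off-diagonal entries of $Q_{i}$ have $\p$-adic valuations $v_{\p}(J(T^{-i}, T^{k}))$ and $v_{\p}(J(T^{-i-k}, T^{k}))$ (both at most $t = v_{\p}(q)$ and summing to $t$), so the GCD of all entries of $Q_{i}$ has $\p$-adic valuation equal to their minimum; hence $v_{\p}(s_{1})$ is that minimum, and then $v_{\p}(s_{2}) = v_{\p}(\det Q_{i}) - v_{\p}(s_{1}) = t - \min = \max$. Thus $\{s_{1}, s_{2}\} = \{J(T^{-i}, T^{k}), J(T^{-i-k}, T^{k})\}$ as a multiset up to units.

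For the block $N_{0}$, reading off Lemma \ref{im}(2)-(4) in the basis $(\mathbf{1}, f_{k}, [0])$ gives a $3\times 3$ matrix whose first column is zero; the entry $-1/2$ is a unit, forcing $d_{1} = 1$, and the $2\times 2$ minor from rows $\{1,2\}$ and columns $\{2,3\}$ evaluates to $(q+1)/4$, which is also a unit in $R$ (since $\p \mid q$ implies $\p \nmid q+1$), giving $d_{2} = 1$; since $\nu_{Q}(\mathbf{1}) = 0$ we have $\det = 0$ so $s_{3} = 0$. Therefore the SNF of $\nu_{Q}|_{N_{0}}$ is $\mrm{diag}(1,1,0)$. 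Finally, as $i$ ranges over $\{1,\ldots,k-1\}$, the pair $(-i \bmod (q-1),\, -i-k \bmod (q-1))$ traces out $2$-element subsets whose union is exactly $\{1,\ldots,q-2\}\setminus\{k\}$, so assembling the block SNFs recovers the diagonal matrix stated in the Corollary.

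The one substantive step is the identity $J(T^{-i}, T^{k})\, J(T^{-i-k}, T^{k}) = -q$: it is precisely here that the hypothesis $q \equiv 3 \pmod 4$ enters via $\psi(-1) = -1$ and makes the sign differ from the Paley graph computation of \cite{CSX}. Everything else is bookkeeping with Lemma \ref{minor} and the block decomposition.
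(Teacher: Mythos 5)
Your proposal is correct and follows the same route the paper intends: the corollary is left as an immediate consequence of Lemma \ref{im}, and you supply exactly the omitted bookkeeping (extending Lemma \ref{im}(1) to $f_{i+k}$, the Gauss-sum identity $J(T^{-i},T^{k})J(T^{-i-k},T^{k})=g(\psi)^{2}=-q$, the gcd/determinant argument on each $2\times 2$ block, and the $\mrm{diag}(1,1,0)$ block for $N_{0}$). Note only that ``similar'' in the corollary must be read as ``equivalent'' (the listed entries are not eigenvalues of $Q$), which is precisely the reading your SNF computation establishes.
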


So computing the $p$-adic valuations of Jacobi sums will give us the $p$-elementary divisors of $Q$.

An integer $a$ not divisible by $q-1$ has, when reduced modulo $q-1$, a unique $p$-digit expansion $a \equiv a_{0}+a_{1}p +\ldots+a_{t-1}p^{t-1} \pmod{q-1}$, where $0 \leq a_{i}\leq p-1$. We represent this expansion by the tuple of digits $(a_{0},\ldots, a_{i}, \ldots,a_{t-1})$.
By $s(a)$ we denote the sum $\sum a_{i}$. For example, $1$ has the expansion $(1, \ldots, 0 ,\ldots 0)$ and $s(1)=1$. 

Applying Stickelberger's theorem on Gauss Sums \cite{Stick} and the well know relation between Gauss and Jacobi sums we can deduce the following theorem.
\begin{theorem}\label{stick}
Let $q$ be a power of a prime $p$ and let $a$ and $b$ be integers not divisible by $q-1$. If $a+b \nequiv 0 \pmod{q-1}$, then we have 
$$v_{p}(J(T^{-a},T^{-b}))= \frac{s(a)+s(b)-s(a+b)}{p-1}.$$  
In other words, the $p$-adic valuation of $J(T^{-a},T^{-b})$ is equal to the number of carries, when adding $p$-expansions of $a$ and $b$ modulo $q-1$.
\end{theorem}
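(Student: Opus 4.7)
The plan is to deduce Theorem~\ref{stick} as a direct consequence of Stickelberger's congruence for Gauss sums together with the standard identity expressing Jacobi sums as quotients of Gauss sums. First I would fix a non-trivial additive character $\psi_0$ of $K$ and write $g(T^{-c}) = \sum_{x \in \K} T^{-c}(x)\psi_0(x)$ for the associated Gauss sum. Stickelberger's theorem then asserts that for every integer $c$ with $c \not\equiv 0 \pmod{q-1}$, the $p$-adic valuation of $g(T^{-c})$ is exactly $s(c)/(p-1)$, where $s(c)$ denotes the sum of the base-$p$ digits of $c$ reduced modulo $q-1$. Since the hypotheses $a, b, a+b \not\equiv 0 \pmod{q-1}$ ensure each of $T^{-a}$, $T^{-b}$, $T^{-(a+b)}$ is non-trivial, Stickelberger applies to all three.

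Next I would invoke the classical Jacobi--Gauss identity: under exactly the hypothesis that $T^{-a}$, $T^{-b}$ and $T^{-(a+b)}$ are all non-trivial, one has
\[
J(T^{-a}, T^{-b}) \;=\; \frac{g(T^{-a})\, g(T^{-b})}{g(T^{-(a+b)})}.
\]
Taking $p$-adic valuations on both sides and substituting Stickelberger's formula term by term yields
\[
v_p\!\left(J(T^{-a}, T^{-b})\right) \;=\; \frac{s(a)}{p-1} + \frac{s(b)}{p-1} - \frac{s(a+b)}{p-1} \;=\; \frac{s(a) + s(b) - s(a+b)}{p-1},
\]
which is precisely the claimed identity.

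For the final sentence concerning carries, I would appeal to the elementary digit-sum identity $s(a) + s(b) - s(a+b) = (p-1)\,C(a,b)$, where $C(a,b)$ counts the number of carries occurring when the base-$p$ expansions of $a$ and $b$ are added modulo $q-1$. This is proved by a straightforward induction on digit position: at each position, a carry removes $p$ from the sum of two digits and adds $1$ to the next higher digit, producing a net change of $-(p-1)$ in the digit sum. Dividing the preceding valuation formula by $p-1$ and rewriting in terms of $C(a,b)$ gives the carry interpretation.

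The main obstacle is almost entirely bookkeeping rather than substance: one must verify carefully that none of $a$, $b$, $a+b$ reduce to $0$ modulo $q-1$ (guaranteed by hypothesis) so that both Stickelberger and the Jacobi--Gauss identity apply in their stated form, and one must use the correct convention for $T^{q-1}$ at $0$ that the paper has already fixed. Once this is in place, the proof is a one-line computation.
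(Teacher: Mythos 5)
Your proposal is correct and follows exactly the route the paper takes: the paper derives this statement by citing Stickelberger's theorem together with the identity $J(\chi,\psi)=g(\chi)g(\psi)/g(\chi\psi)$ for non-trivial $\chi$, $\psi$, $\chi\psi$, and your write-up simply makes that one-line derivation explicit, including the standard Kummer-type identity equating $\bigl(s(a)+s(b)-s(a+b)\bigr)/(p-1)$ with the number of carries in the (cyclic, modulo $q-1$) base-$p$ addition. No gaps.
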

The $p$-adic expansion of $k=\dfrac{q-1}{2}$ is $\sum\limits_{i=0}^{t-1} \dfrac{p-1}{2}p^{i}$ and thus $s(k)=\dfrac{t(p-1)}{2}$. We have $v_{p}(J(T^{-i},T^{k}))=c(i):=\dfrac{s(i)+t(p-1)/2-s(i+k)}{p-1}$. In other words, $c(i)$ is the number of carries when adding the $p$-adic expansions of $i$ and $k$, modulo $q-1$. Observing that $c(i)+c(q-1-i)=t$, we see that $c(i) \leq t$.

We need to solve the following problem in order to find the $p$-elementary divisors of $Q$.

\paragraph{\textbf{The Counting Problem.}} For $1\leq i \leq q-2$ and $i \neq k$, by $c(i)$ we denote the number of carries when adding the $p$-adic expansions of $i$ and $k$, modulo $q-1$. Given $0 \leq a \leq t$, find $e_{a}:=|\{i| \ c(i)=a\}|$. 

The multiplicity of $p^{a}$ as an elementary divisor of $Q$ is $e_{a}$. This problem is the same as the counting problem in \cite[\S 4]{CSX}. The solution found in \cite{CSX} is described in the following Lemma.
\begin{lemma}
\begin{enumerate}
\item $e_{t}= \left(\dfrac{p+1}{2}\right)^{t}-2$;
\item and for $1\leq i <t$, $$e_{i}= \sum\limits_{j=0}^{min\{i, t-i\}} \dfrac{t}{t-j} {t-j \choose j} {t-2j \choose i-j} (-p)^{j} \left(\dfrac{p+1}{2}\right)^{t-2j}.$$
\end{enumerate}
\end{lemma}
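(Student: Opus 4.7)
The plan is to translate the digit-by-digit carry analysis into the trace of a $2\times 2$ transfer matrix, evaluate that trace via Waring's formula for power-sums of eigenvalues, and correct for a handful of excluded tuples at the end.

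First I would classify each base-$p$ digit $i_{j}\in\{0,\ldots,p-1\}$ as \emph{Low} if $i_{j}\le (p-3)/2$, \emph{Middle} if $i_{j}=(p-1)/2$, or \emph{High} if $i_{j}\ge (p+1)/2$. Since every digit of $k$ equals $(p-1)/2$, a one-line check of $i_{j}+(p-1)/2+\epsilon_{j-1}$ shows the cyclic carry-out $\epsilon_{j}$ is $0$ at every Low, $1$ at every High, and $\epsilon_{j-1}$ at every Middle. Hence $c(i)=\#\{j:\epsilon_{j}=1\}$ is determined by the digit-sequence alone, except for the unique all-Middle tuple, where both the constant-$0$ and constant-$1$ cyclic carry sequences satisfy the recursion even though the actual sum $k+k=q-1$ has $c=0$.

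Next I would record these transitions in the matrix
\[
M(x)=\begin{pmatrix} u+1 & u \\ u x & (u+1)x \end{pmatrix},\qquad u=(p-1)/2,
\]
whose $(\epsilon',\epsilon)$-entry equals $x^{\epsilon'}$ times the number of digit choices that send state $\epsilon$ to $\epsilon'$. Then $\mrm{tr}\!\bigl(M(x)^{t}\bigr)$ is a generating function for cyclic length-$t$ digit-sequences tracked by $x^{c(i)}$, with a single overcount: the all-Middle tuple is counted with weight $x^{0}+x^{t}$ rather than its true $x^{0}$, so the trace equals $N(x)+x^{t}$ where $N(x)=\sum_{a} N_{a}x^{a}$ enumerates all $p^{t}$ tuples by their genuine carry count. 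Since $M(x)$ has trace $\tfrac{p+1}{2}(1+x)$ and determinant $px$, Waring's formula for $\lambda_{+}^{t}+\lambda_{-}^{t}$ yields
\[
\mrm{tr}\!\bigl(M(x)^{t}\bigr)=\sum_{j=0}^{\lfloor t/2\rfloor}\frac{t}{t-j}\binom{t-j}{j}(-p)^{j}\left(\frac{p+1}{2}\right)^{t-2j}(1+x)^{t-2j}x^{j}.
\]

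Finally I would extract the coefficient of $x^{i}$ by expanding $(1+x)^{t-2j}$ and then excise the three excluded tuples $(0,\ldots,0)$, $((p-1)/2,\ldots,(p-1)/2)$, $(p-1,\ldots,p-1)$, whose carry counts are $0$, $0$, $t$ respectively. For $1\le i\le t-1$ none of these contributes to $N_{i}$, so $e_{i}$ equals the extracted coefficient directly, giving the expression in part (2). For $i=t$ only the $j=0$ summand survives and equals $((p+1)/2)^{t}$; subtracting $1$ for the all-Middle over-count in the trace and $1$ more for the excluded all-$(p-1)$ tuple yields $e_{t}=((p+1)/2)^{t}-2$, which is part (1). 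I expect the delicate step to be the cyclic bookkeeping in the middle paragraph: verifying that $\mrm{tr}(M(x)^{t})-N(x)$ is exactly $x^{t}$ on account of the all-Middle ambiguity, and that no further corrections intrude in the range $1\le i\le t-1$. Once this is pinned down, the formula is a direct consequence of Waring's identity.
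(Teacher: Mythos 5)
Your proposal is correct and follows essentially the same route as the paper, which does not reproduce the argument but defers it to \cite[\S 4]{CSX}, explicitly citing the $p$-ary add-with-carry algorithm and the transfer matrix method that you carry out. Your digit classification, the transfer matrix with trace $\frac{p+1}{2}(1+x)$ and determinant $px$, the Waring expansion, and the corrections at $x^{0}$ and $x^{t}$ (the phantom all-middle closed walk together with the excluded all-$0$ and all-$(p-1)$ tuples, which is why only $e_{t}$ and $e_{0}$ pick up the $-2$) all check out against the stated formulas.
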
 
The proof of the above Lemma is in \cite[\S 4]{CSX}. 
The authors used the p-ary add-with-carry algorithm \cite[Theorem 4.1]{X2} and the transfer matrix method \cite[Page 501]{EC1}. Theorem \ref{ptc} immediately follows from this Lemma.    

\section*{Acknowledgement}
I thank Prof. Peter Sin for his valuable suggestions and feedback. 
\bibliographystyle{plain}
\bibliography{my}    
\end{document}